\newcommand{\cn}{\color{black}}
\newtheorem{theorem}{Theorem}[section]
\newtheorem{corollary}[theorem]{Corollary}
\newtheorem{definition}[theorem]{Definition}
\newtheorem{lemma}[theorem]{Lemma}
\newtheorem{proposition}[theorem]{Proposition}
\newtheorem{remark}[theorem]{Remark}
\newenvironment{proof}[1][Proof]{\noindent \emph{#1.} }{\hfill \ 
\rule{0.5em}{0.5em}}
\makeatletter\@addtoreset{equation}{section}\makeatother
\makeatletter\@addtoreset{figure}{section}\makeatother
\makeatletter\@addtoreset{table}{section}\makeatother
\begin{document}
 
\title{Reduced Higher Order SVD: ubiquitous rank-reduction \\   
 method in tensor-based scientific computing}

%
 
\author{Venera Khoromskaia \thanks{Max-Planck-Institute for
        Mathematics in the Sciences, Inselstr.~22-26, D-04103, Leipzig, Germany   ({\tt vekh@mis.mpg.de}).}
        \and
        Boris N. Khoromskij \thanks{Max-Planck-Institute for
        Mathematics in the Sciences, Inselstr.~22-26, D-04103 Leipzig,
        Germany ({\tt bokh@mis.mpg.de}).}
        }
 
  \date{}

\maketitle

\begin{abstract}
Tensor numerical methods, 
 based on the rank-structured tensor representation of $d$-variate functions and operators 
 discretized on large $n^{\otimes d }$ grids,  
are designed to provide $O(dn)$ complexity of numerical calculations  contrary to $O(n^d)$ 
scaling  by conventional   grid-based  methods.  
 However,  multiple  tensor operations may lead to enormous increase 
 in the  tensor ranks (curse of ranks) of the  target  data, making calculation intractable. 
 Therefore  one of the most important steps in tensor calculations is the robust and efficient 
 rank  reduction procedure   which should be performed many times in the course of 
various tensor transforms in multidimensional operator and function calculus. 
The rank reduction scheme based on the Reduced Higher Order SVD (RHOSVD) 
 introduced in \cite{KhKh3:08} played a significant role in the development of tensor numerical methods.
Here, we  briefly survey the essentials of RHOSVD method and then focus on  some new theoretical 
and computational aspects 
of the RHOSVD  demonstrating  that this rank reduction 
technique constitutes the  basic  ingredient in tensor computations for real-life problems. 
In particular,  the stability analysis of RHOSVD is presented.  
We   recall  the performance of the RHOSVD in tensor-based calculation of the Hartree potential 
in computational quantum chemistry.
  We introduce the multilinear algebra of tensors represented in the range-separated (RS) tensor format. 
  This allows to apply the 
RHOSVD rank-reduction techniques to non-regular functional data with many singularities, for example, 
to the rank-structured computation of the collective multi-particle interaction potentials in 
bio-molecular modeling, as well as to complicated composite radial functions.   
The new  theoretical and numerical results  on application of the RHOSVD in  scattered data modeling are presented. 
We  finally underline that RHOSVD proved to be the efficient rank reduction technique in 
numerous applications ranging from numerical treatment of multi-particle systems in material 
sciences up to a numerical solution of PDE constrained control
problems in $\mathbb{R}^d$.  
  \end{abstract}

\noindent\emph{Key words:} Reduced higher order SVD, low-rank tensor product approximation, multivariate functions, 
tensor calculus, rank reduction, Tucker and canonical tensor formats,
 interaction potentials, scattered data modeling.

\noindent\emph{AMS Subject Classification:} 65F30, 65F50, 65N35, 65F10

  \section{Introduction} \label{sec:Introd}  
    
  The mathematical models in large-scale scientific computing are often described by steady state or dynamical PDEs.
  The underlying physical, chemical or biological systems usually live in 3D physical space $\mathbb{R}^3$ 
  and may depend on many 
  structural parameters. The solution of arising discrete systems of equation and optimization 
  of the model parameters 
  leads to the challenging numerical problems. Indeed, the accurate grid-based approximation of operators and 
  functions involved 
  requires large spacial grids in $\mathbb{R}^d$, leading to the considerable storage space and 
  to the implementation of 
  various algebraic operations on huge vectors and matrices. 
  For further discussion we shall assume that all functional entities are discretized on $n^{\otimes d}$ spacial grids
  where the univariate  grid size $n$ may vary in the range of several thousands. 
  The linear algebra on $N$-vectors and $N\times N$ matrices with $N=n^d$
  quickly becomes non-tractable as $n$ and $d$ increase.

 Tensor numerical methods \cite{Khor_book:18,Khor_bookQC:18} provide means to   overcome the problem of 
 the exponential increase of numerical complexity in the dimension of the 
 problem $d$, 
  due to their intrinsic feature of reducing the computational costs of multilinear algebra on rank-structured 
  data to merely linear scaling in both the grid-size $n$ and dimension $d$. They appeared as bridging  of the 
  algebraic tensor decompositions initiated in chemometrics 
\cite{Comon:02,Comon:09,smilde-book-2004,Cichocki:2016,DMV-SIAM2:00,Sidi:02,Sidi:17} and of the nonlinear 
approximation theory 
on separable low-rank representation of multivariate functions and 
operators \cite{HaKhtens:04I,KhKh:06}. 
    The canonical \cite{Hitch:27}, Tucker \cite{Tuck:1966}, tensor train (TT) and hierarchical Tucker (HT) formats 
    \cite{OsTy_TT:09,OsTy_TT2:09,HaKu:09} are the most
  commonly used rank-structured parametrizations in applications of modern tensor numerical methods. 
  Further data-compression to the logarithmic scale can be achieved by using the quantized-TT  
  (QTT) \cite{KhQuant:09} tensor approximation.
    At present there is an active research toward further 
  progress of tensor numerical methods in scientific computing
  \cite{Khor_book:18,Khor_bookQC:18,KSU:13,LiKeKKMa:19,RaOs_Conv:15}. 
  In particular, there are considerable achievements of tensor-based approaches in computational chemistry 
  \cite{VeKh_Diss:10,KhKh_CPC:13,KKF:09,DoKhOsel:11,KKNSchw:13}, 
  in bio-molecular modeling 
  \cite{BKK_RS:17,BeKhKhKwSt:18,BKhor_Dirac:18,KKKSB:21},
  in optimal control problems (including the case of fractional control)
  \cite{HKKS:18,DKK:21,DP:19,SKKS:21}, and in many other fields  
  \cite{BSU:16,BEEN:19,MaEsLiHaZa:19,LRSV:12,Cichocki:2016,Litv:22}.

  Here we notice that tensor numerical methods proved to be efficient when all input data and 
  all intermediate quantities 
  within the chosen computational scheme are presented in certain 
  low-rank tensor format with controllable rank parameters,
  i.e. on low-rank tensor manifolds. In turn, tensor decomposition of the full format data arrays is considered 
  as N-P hard problem.
  For example, the truncated HOSVD \cite{DMV-SIAM2:00} of an $n^{\otimes d}$-tensor in the Tucker format 
  amounts to $O(n^{d+1})$   arithmetic operations while 
  the respective cost of the TT and HT higher-order SVD \cite{Osel_TT:11,Gras:10}  is estimated by $O(n^{\frac{3}{2} d})$, 
  indicating that rank decomposition   of full format tensors
  still suffers from the ``curse of dimensionality'' and practically could not be applied 
  in large scale computations. 
  
  On the other hand, often, the initial data for complicated numerical algorithms may be chosen 
  in the canonical/Tucker tensor formats, 
  say as a result of discretization of a short sum of Gaussians or multivariate polynomials, 
  or as a result of the analytical approximation by using Laplace
  transform representation and sinc-quadratures \cite{Khor_book:18}.
  However, the ranks of tensors are multiplied in the course of various tensor operations, leading to dramatical
  increase in the rank parameter (``curse of ranks'') of a resulting tensor, thus making 
  tensor-structured calculation intractable. 
  Therefore, a stable rank reduction schemes are the main  prerequisite for 
  the success of rank-structured tensor techniques.

  Invention of the Reduced Higher Order SVD (RHOSVD) 
  in \cite{KhKh3:08} and the corresponding rank reduction procedure based on the 
  canonical-to-Tucker transform and subsequent  canonical approximation of the small Tucker core 
  (Tucker-to-canonical transform) 
  was a decisive step in development of the tensor numerical methods in scientific computing. 
  In contrast to the conventional HOSVD, the RHOSVD does not need a construction of the full 
  size tensor for finding  
  the orthogonal subspaces of the Tucker tensor representation. 
  Instead, RHOSVD applies to numerical   data  in the canonical tensor format (with possibly large initial rank $R$) 
  and exhibits the $O(d nR \min\{n,R\})$ complexity, uniformly in the dimensionality of the problem, $d$,
    and it was an essential step ahead in development of tensor-structured numerical techniques.     
  
  In particular, this rank reduction scheme was applied to calculation of 3D and 6D
  convolution integrals in tensor-based solution of the Hartree-Fock equation
  \cite{KhKh3:08,VeKh_Diss:10}. Combined with the Tucker-to-canonical transform, 
  this algorithm provides a stable procedure for the rank reduction of possibly 
  huge ranks in tensor-structured calculations of the Hartree potential. 
  The RHOSVD based rank reduction scheme for the canonical tensors  is specifically useful for 3D problems, 
  which are most often in real-life applications.
  However, the RHOSVD-like  procedure can be also efficiently applied in the construction of 
  the TT tensor format from the canonical tensor input, which often appears in tensor calculations.

 The RHOSVD is the basic tool for the construction of the range-separated (RS) tensor 
 format introduced in \cite{BKK_RS:17} for the low-rank 
  tensor representation of the bio-molecular long-range electrostatic   potentials. 
  Recent example on the RS representation of the multi-centered Dirac delta function 
  \cite{BKhor_Dirac:18} paves the way for efficient solution decomposition scheme introduced  for the 
  Poisson-Boltzmann equation   \cite{BeKhKhKwSt:18,KKKSB:21}.

  In some applications the data  could be presented as a sum of highly localized and rank-structured 
  components so that their further numerical treatment again requires the rank reduction procedure 
  (see section \ref{sec:Apll2Data} concerning the long-range potential calculation for many-particle system).   
  Here, we consider the example on representation of the scattered data by a sum of Slater kernels 
  and show the existence of the low-rank representation for such data in the RS tensor format.
  The numerical examples demonstrate the practical efficiency of such kind of tensor interpolation.

   Rank reduction procedure by using the RHOSVD is a mandatory part in   solving the three-dimensional 
   elliptic and pseudo-differential equations in the rank-structured tensor format. 
   In the course of preconditioned iterations, 
   the tensor ranks of the governing operator, the precoditioner  and of the current iterand are multiplied
   at each iterative step, and 
   therefore a fast and robust rank reduction techniques is the prerequisite for 
   such methodology applied in the framework of iterative elliptic problem solvers.
   In particular, this approach was applied to the PDE constrained (including the case of fractional operators) 
   optimal control problems \cite{HKKS:18,SKKS:21}. 
   As result, the computational complexity can be reduced to 
   almost linear scale, $O(nR)$, contrary to conventional $O(n^3)$ complexity, 
   as demonstrated by numerics in \cite{HKKS:18,SKKS:21}.

  Tensor-based algorithms and methods are now being widely used and developed further
  in the communities of  scientific computing and data science. Tensor techniques  
  evolve   in traditional tensor decompositions in data processing   \cite{smilde-book-2004,MaEsLiHaZa:19,EGLS:21},
  and they are actively promoted for  tensor-based solution of the multidimensional problems  
  in numerical analysis and quantum chemistry
  \cite{KKF:09,RaOs_Conv:15,SKKS:21,Khor_book:18,KKF:09,KU:16,DP:19,ORU:18}.
  Notice that in the case of higher dimensions the rank reduction in the canonical format can be performed 
 directly (i.e., without intermediate use of the Tucker approximation) by using 
 the cascading ALS iteration in the CP format, see \cite{KhSch:11} concerning the tensor-structured 
 solution of the stochastic/parametric PDEs.

   The rest of the papers is organized as follows. In section \ref{sec:RHOSDV_all} 
   we sketch some results on the construction of the RHOSVD and present
   some old and new results on the stability of error bounds. In section
   \ref{ssec:MixedTuckCp} we discuss the mixed canonical-Tucker tensor format and 
   the Tucker-to-canonical transform. Section \ref{ssec:Multi_Int}
   recalls the results from \cite{VeKh_Diss:10} on calculation of the 
   multidimensional convolution integrals with the Newton kernel arising in computational quantum chemistry.
   Section \ref{sec:RHOSVD_RSf} addresses the application of RHOSVD to RS parametrized tensors.
   In \S\ref{ssec:RS_form} we discuss  the application of RHOSVD in  multilinear operations 
   of data in the RS tensor format. The scattered data modeling is considered in \S\ref{sec:Apll2Data}
   from both theoretical and computational aspects. 
   Application of RHOSVD for tensor-based representation of Greens kernels is discussed in 
   section \ref{sec:Fundam_sol}. Section \ref{sec:solvers} gives a short sketch of RHOSVD in application 
   to tensor-structured elliptic problem solvers.

\section{Reduced HOSVD and CP-to-Tucker Transform}
 \label{sec:RHOSDV_all}

\subsection{Reduced HOSVD: error bounds}\label{ssec:RHOSDV_theory}

 In computational schemes including bilinear tensor-tensor or matrix-tensor operations
the increase of tensor ranks leads to the critical loss of efficiency. 
Moreover, in many applications, for example in electronic structure calculations,
the canonical tensors with large rank parameters arise as the result of polynomial type or convolution transforms
of some function related tensors (say, electron density, the Hartree potential, etc.).
In what follows, we present the new look on the direct method of rank reduction for the canonical tensors
with large initial rank, the reduced HOSVD, first introduced and analyzed in \cite{KhKh3:08}.

 We denote by $\mbox{\boldmath{$\mathcal{T}$}}_{{\bf r},{\bf n}}$ the class of tensors  ${\bf A}\in \mathbb{R}^{n\otimes d}$ 
 parametrized in the rank-${\bf r}$, ${\bf r}=(r_1,\cdots,r_d)$ orthogonal Tucker format,
$$
{\bf A}={\boldsymbol{\beta}}\times_{1}V^{(1)}\times_{2}
\cdots   \times_{d} V^{(d)} \in \mbox{\boldmath{$\mathcal{T}$}}_{{\bf r},{\bf n}},
$$
 with the orthogonal side-matrices
 $V^{(\ell)}=[{\bf v}^{(\ell)}_1 \ldots {\bf v}^{(\ell)}_{r_\ell} ]
\in \mathbb{R}^{ n \times  r_\ell }$  and  with the core coefficient tensor
${\boldsymbol{\beta}} \in \mathbb{R}^{r_1\times...\times r_d}$.

Likewise, $\mbox{\boldmath{$\mathcal{C}$}}_{{R},{\bf n}}$ denotes the class of rank-$R$ canonical tensors. 
For given ${\bf A}\in\mbox{\boldmath{$\mathcal{C}$}}_{{R},{\bf n}} $
in the rank-$R$ canonical format, 
\begin{equation}\label{eqn:can_A3}
  {\bf A} ={\sum}_{\nu =1}^{R} \xi_{\nu}
  {\bf u}^{(1)}_{\nu}  \otimes \ldots \otimes {\bf u}^{(d)}_{\nu},
 \quad  \xi_{\nu}\in \mathbb{R},
\end{equation}
with normalized canonical vectors, i.e. $\|{\bf u}^{(\ell)}_\nu\|=1$ for $\ell=1,...,d$, $\nu=1,...,R$.

The standard algorithm for the Tucker tensor decomposition  \cite{DMV-SIAM2:00} is based 
on HOSVD applied to full tensors of size $n^d$ which exhibits $O(n^{d+1})$ computational
complexity. The question is how to simplify the HOSVD Tucker approximation in the case of 
canonical input tensor in the form (\ref{eqn:can_A3}) without use of 
 the full format representation of ${\bf A}$,
 and in the situation when the CP rank parameter $R$ and the mode sizes $n$ of the input can be 
 sufficiently large. 
 
 First, let us use the equivalent (nonorthogonal) rank-${\bf r}=(R,...,R)$ Tucker 
 representation of the tensor (\ref{eqn:can_A3}), 
\begin{equation}\label{eqn:can_A_Tuck}
 {\bf A}= \boldsymbol{\xi} \times_1 {U}^{(1)}\times_2 {U}^{(2)} \cdots \times_d {U}^{(d)}, \quad
\boldsymbol{\xi}=diag\{\xi_{1},...,\xi_{R}\},
\end{equation}
via contraction of the diagonal tensor $\boldsymbol{\xi}=diag\{\xi_{1},...,\xi_{R}\}\in \mathbb{R}^{R\otimes d}$ 
with $\ell$-mode side matrices 
$U^{(\ell)}=[{\bf u}^{(\ell)}_{1}, ..., {\bf u}^{(\ell)}_{R} ]\in \mathbb{R}^{n\times R}$, 
 see Figure \ref{fig:C2T1}. 
\begin{figure}[tbh]
  \begin{center}
 \includegraphics[width=15cm]{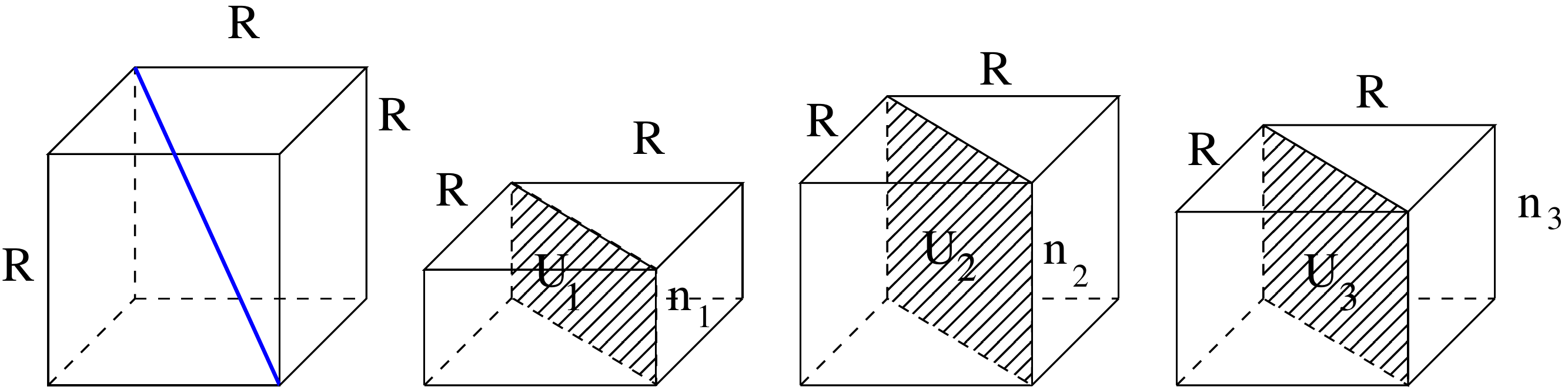}
\end{center}
\caption{\small Contracted product representation of the rank-$R$ canonical tensor.}    
\label{fig:C2T1}%
\end{figure} 
Then the  problem of canonical to Tucker approximation can be solved by the method of 
reduced HOSVD (RHOSVD) introduced in \cite{KhKh3:08}.
 The basic idea of the reduced HOSVD is that for large (function related)
tensors given in the canonical format their HOSVD does not require 
the construction of a tensor in the full format and SVD based computation of its matrix unfolding. 
Instead, it is sufficient to compute the SVD of the directional matrices $U^{(\ell)}$ 
in (\ref{eqn:can_A_Tuck}) composed by
only the vectors of the canonical tensor in every dimension separately,
as shown in Figure \ref{fig:C2T1}. This will
provide the initial guess for the Tucker orthogonal basis in the given dimension. 
For the practical applicability, the results of the approximation theory 
on the low-rank approximation to the 
multivariate functions, exhibiting exponential error decay in the Tucker rank, 
are of the principal significance \cite{Khor1:06}.
 
In what follows, we suppose that $n\leq R$ and denote the SVD of the  side-matrix $U^{(\ell)}$ by
\begin{equation}\label{eqn:SVD_Ul}
U^{(\ell)}={Z}^{(\ell)} D_\ell {V^{(\ell)}}^T=
\sum\limits_{k=1}^n \sigma_{\ell,k} {\bf z}_k^{(\ell)}\; {{\bf v}_k^{(\ell)}}^T,\quad
{\bf z}_k^{(\ell)}\in \mathbb{R}^{n},\; {\bf v}_k^{(\ell)}\in \mathbb{R}^{R},
\end{equation}
with the orthogonal matrices 
${ Z}^{(\ell)}=[{\bf z}_1^{(\ell)},...,{\bf z}_n^{(\ell)}]\in \mathbb{R}^{n\times n}$, and
${V}^{(\ell)}=[{\bf v}_1^{(\ell)},...,{\bf v}_n^{(\ell)}]\in \mathbb{R}^{R\times n}$, $\ell=1,...,d$.
We use the following notations for the vector entries, $v_k^{(\ell)}(\nu)=v_{k,\nu}^{(\ell)}$
($\nu=1,...,R$).

To fix the idea, we introduce the vector of rank parameters, ${\bf r}=(r_1,...,r_d)$, and let  
\begin{equation}\label{eq:svdr}
 U^{(\ell)}\approx W^{(\ell)}:= {Z}_0^{(\ell)} D_{\ell,0} {V_0^{(\ell)}}^T,
\end{equation}
be the rank-$r_\ell$ truncated SVD of the side-matrix $U^{(\ell)}$ ($\ell=1,...,d$). Here the matrix
$ D_{\ell,0}=\mbox{diag} \{\sigma_{\ell,1},\sigma_{\ell,2},...,\sigma_{\ell,r_\ell}\}$  
is the submatrix of $D_\ell$ in (\ref{eqn:SVD_Ul}) and
$$
{Z}_0^{(\ell)}=[{\bf z}^{(\ell)}_1,...,{\bf z}^{(\ell)}_{r_\ell}] \in \mathbb{R}^{n\times r_\ell},\;
{V_0}^{(\ell)}\in \mathbb{R}^{R\times r_\ell}, 
$$
 represent the respective dominating $(n\times r_\ell)$-submatrices  of the left and right factors
in the complete SVD decomposition in (\ref{eqn:SVD_Ul}). 
 
\begin{definition}\label{def:7.2} (Reduced HOSVD, \cite{KhKh3:08}).  
Given the canonical tensor ${\bf A}\in\mbox{\boldmath{$\mathcal{C}$}}_{{R},{\bf n}} $, the
truncation rank parameter ${\bf r}$, ($r_\ell\leq R$), and rank-$r_\ell$ truncated SVD of $U^{(\ell)}$, 
see (\ref{eq:svdr}), then
the  RHOSVD approximation of ${\bf A}$ is defined by the rank-${\bf r}$ orthogonal Tucker tensor
\begin{eqnarray} \label{eqn:RHOSVD}
\nonumber {\bf A}_{({\bf r})}^0 &:=& \boldsymbol{\xi} \times_1 W^{(1)}\times_2 \cdots \times_d W^{(d)}=
\boldsymbol{\xi} \times_1 \left[{Z}_0^{(1)} D_{1,0} {V_0^{(1)}}^T \right] \times_2 
\cdots \times_d \left[{Z}_0^{(d)}D_{d,0} {V_0^{(d)}}^T \right] \\
&=& \left(\boldsymbol{\xi} \times_1 [D_{1,0} {V_0^{(1)}}^T ] \times_2 \cdots \times_d [D_{d,0} {V_0^{(d)}}^T]\right)
\times_1 {Z}_0^{(1)} \times_2 \cdots \times_d {Z}_0^{(d)} \,\, \in {\cal T}_{\bf r},
\end{eqnarray}
obtained  by the projection of canonical side matrices $U^{(\ell)}$ onto the  left orthogonal singular 
matrices $Z_0^{(\ell)}$, defined in (\ref{eq:svdr}).
\end{definition}

The sub-optimal Tucker approximant  (\ref{eqn:RHOSVD}) is simple to 
compute and it provides accurate approximation to the initial canonical tensor even with rather small Tucker rank.
Moreover, this provides the good initial guess to calculate the best rank-${\bf r}$ Tucker approximation 
by using the ALS iteration.
In our numerical practice, usually, only one or two ALS iterations are required for convergence.
For example, in case $d=3$,
algorithmically, the one step of the canonical-to-Tucker ALS algorithm reduces to the following operations. 
Redistributing the normalization coefficients $\boldsymbol{\xi}$ 
and substituting (\ref{eqn:RHOSVD}) in the equation  (\ref{eqn:can_A_Tuck}), we obtain  
\begin{equation}\label{eqn:can_A_Tuck_als}
 {\bf A}= {Z}_0^{(1)}  \times_1 
 {\bf A}_2 \times_2  {Z}_0^{(3)}   ,  
%
\end{equation}
where ${\bf A}_2$ is the contraction 
\[
 {\bf A}_2= \boldsymbol{\xi} \times_1 D_{1,0} {V_0^{(1)}}^T
 \times_3 D_{3,0} {V_0^{(3)}},
\]
as shown in Figure \ref{figs1:Can_Tuck}, and then we optimize the orthogonal subspace in the second variable
by calculating the best rank-$r_2$ approximation to the $r_1 r_3 \times n_2$ matrix 
unfolding to the tensor ${\bf A}_2$. 
The similar contracted 
product representation can be used when $d>3 $, as well as for the construction of the TT representation
for the canonical input.
\begin{figure}[h]
 \centering
\includegraphics[width=10cm]{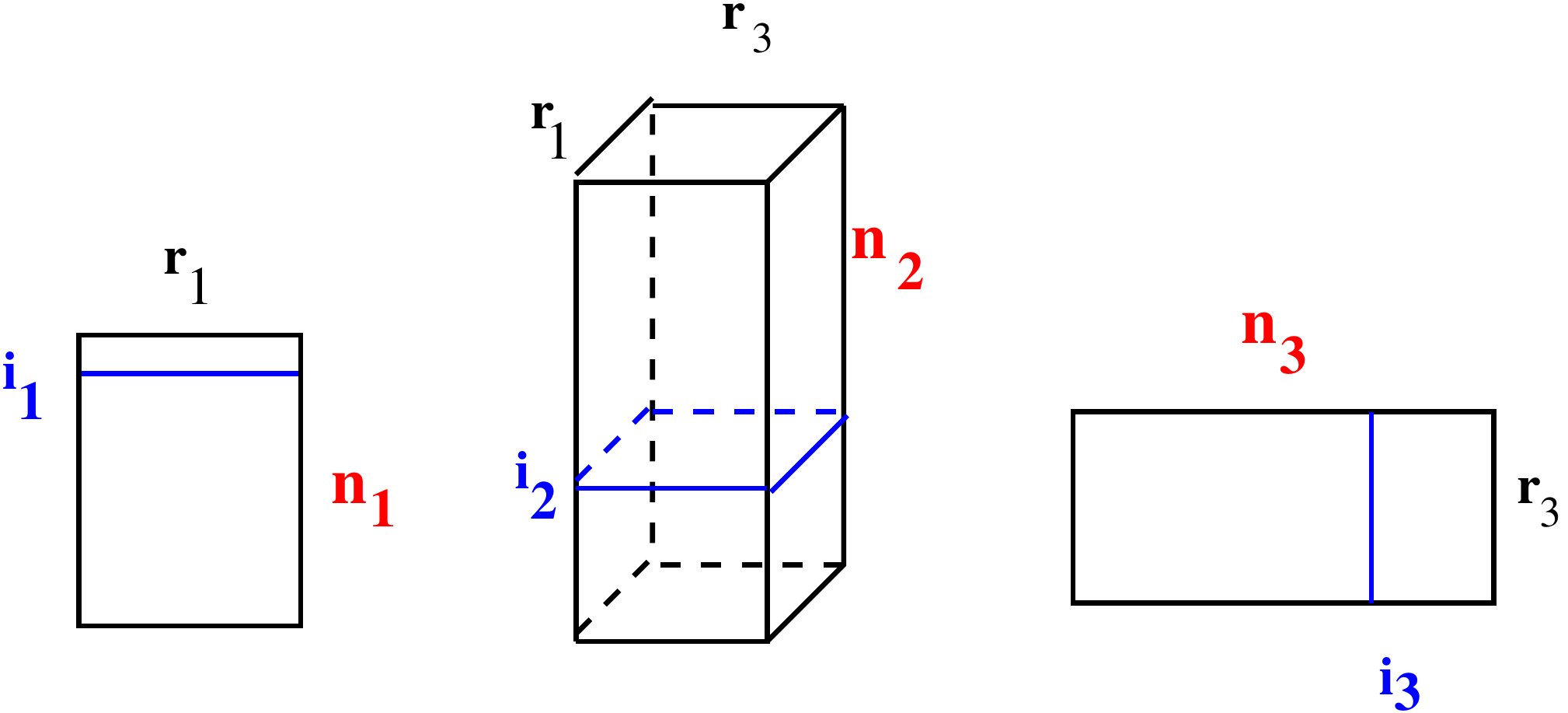} 
\caption{A first step in canonical-to-Tucker decomposition.}
\label{figs1:Can_Tuck}
\end{figure}

Here we notice that the core tensor in the RHOSVD decomposition can be represented in the CP data-sparse format. 
\begin{proposition} \label{prop:RHOSVD_mixTuck}
The core tensor 
\[
  {\boldsymbol{\beta}}_0 = \boldsymbol{\xi} \times_1 [D_{1,0} {V_0^{(1)}}^T ] \times_2 \cdots 
  \times_d [D_{d,0} {V_0^{(d)}}^T]\in \mathbb{R}^{r_1 \times \cdots \times r_d},  
\] 
in the orthogonal Tucker representation (\ref{eqn:RHOSVD}),
$
 {\bf A}_{({\bf r})}^0 ={\boldsymbol{\beta}}_0\times_{1}Z^{(1)}_0 \times_{2}
\cdots   \times_{d} Z^{(d)}_0 \in \mbox{\boldmath{$\mathcal{T}$}}_{{\bf r},{\bf n}},
$
can be recognized as the rank-$R$ canonical tensor of size $r_1 \times \cdots \times r_d$ with the storage request 
$R(\sum\limits_{\ell=1}^d r_\ell)$, which can be calculated entry-wise in $O(R r_1 \cdots r_d)$ operations. 
\end{proposition}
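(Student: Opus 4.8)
The plan is to exploit the fact that the coefficient tensor $\boldsymbol{\xi}=\mathrm{diag}\{\xi_1,\ldots,\xi_R\}$ appearing in (\ref{eqn:can_A_Tuck}) and (\ref{eqn:RHOSVD}) is a superdiagonal tensor of order $d$, hence itself a rank-$R$ canonical tensor: with ${\bf e}_\nu\in\mathbb{R}^R$ the $\nu$-th coordinate vector,
\[
\boldsymbol{\xi}=\sum_{\nu=1}^{R}\xi_\nu\,{\bf e}_\nu\otimes\cdots\otimes{\bf e}_\nu .
\]
Contracting a superdiagonal tensor mode-wise with matrices preserves this structure, and the surviving factors are simply the columns of those matrices; carrying this out yields the claim directly.

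Concretely, I would set $B^{(\ell)}:=D_{\ell,0}{V_0^{(\ell)}}^T\in\mathbb{R}^{r_\ell\times R}$ for $\ell=1,\ldots,d$ and write ${\bf b}^{(\ell)}_\nu\in\mathbb{R}^{r_\ell}$ for its $\nu$-th column, so that $b^{(\ell)}_{k,\nu}=\sigma_{\ell,k}\,v^{(\ell)}_{k,\nu}$ by (\ref{eqn:SVD_Ul})--(\ref{eq:svdr}). Since the mode-$\ell$ product sends ${\bf e}_\nu\mapsto B^{(\ell)}{\bf e}_\nu={\bf b}^{(\ell)}_\nu$ and the mode-$\ell$ multiplications for distinct $\ell$ commute, applying all of them in turn gives
\[
\boldsymbol{\beta}_0=\boldsymbol{\xi}\times_1 B^{(1)}\times_2\cdots\times_d B^{(d)}
=\sum_{\nu=1}^{R}\xi_\nu\,{\bf b}^{(1)}_\nu\otimes{\bf b}^{(2)}_\nu\otimes\cdots\otimes{\bf b}^{(d)}_\nu ,
\]
which is a rank-$R$ canonical representation of $\boldsymbol{\beta}_0$. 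It lives in $\mathbb{R}^{r_1\times\cdots\times r_d}$ because in each ${V_0^{(\ell)}}^T$ it is precisely the index of length $R$ that is contracted against the diagonal of $\boldsymbol{\xi}$, while the surviving mode has length $r_\ell$.

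The storage and cost bounds then follow by counting. The canonical data are the $d$ matrices $B^{(\ell)}\in\mathbb{R}^{r_\ell\times R}$ (the scalars $\xi_\nu$ may be absorbed into any one factor), i.e.\ $R\sum_{\ell=1}^d r_\ell$ real numbers. A single entry is $\boldsymbol{\beta}_0(i_1,\ldots,i_d)=\sum_{\nu=1}^{R}\xi_\nu\prod_{\ell=1}^{d}b^{(\ell)}_{i_\ell,\nu}$, evaluable in $O(dR)$ operations; the full core of size $r_1\cdots r_d$ is assembled by accumulating the $R$ rank-one arrays ${\bf b}^{(1)}_\nu\otimes\cdots\otimes{\bf b}^{(d)}_\nu$, each formed incrementally in $O(r_1\cdots r_d)$ operations, for a total of $O(R\,r_1\cdots r_d)$.

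There is essentially no difficult step here; the only point requiring care is the index bookkeeping in (\ref{eqn:SVD_Ul})--(\ref{eq:svdr}), namely verifying that the summation index $\nu\in\{1,\ldots,R\}$ is the one contracted with $\boldsymbol{\xi}$, so that the canonical rank of $\boldsymbol{\beta}_0$ stays exactly $R$ and only the mode sizes shrink, from $n$ (resp.\ $R$) down to $r_\ell$.
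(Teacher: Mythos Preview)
Your proposal is correct and follows essentially the same approach as the paper: the paper's justification amounts to the single observation that, introducing the matrices ${U_0^{(\ell)}}^T = D_{\ell,0}{V_0^{(\ell)}}^T \in \mathbb{R}^{r_\ell\times R}$ (your $B^{(\ell)}$), the core $\boldsymbol{\beta}_0$ has these as its $\ell$-mode side matrices in a rank-$R$ canonical representation. Your write-up simply makes explicit the superdiagonal structure of $\boldsymbol{\xi}$ and the entry-wise counting that the paper leaves implicit.
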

Indeed, introducing the matrices 
${U_0^{(\ell)}}^T = D_{\ell,0} {V_0^{(\ell)}}^T\in \mathbb{R}^{r_\ell \times R}$,
for $\ell=1,\ldots d$, we conclude that the canonical core tensor ${\boldsymbol{\beta}}_0$ is determined by the 
$\ell$-mode side matrices ${U_0^{(\ell)}}^T$. In the other words, the tensor ${\bf A}_{({\bf r})}^0$ 
is represented in the mixed Tucker-canonical format getting rid of the ``curse of dimensionality'', 
see also \S\ref{ssec:MixedTuckCp} below.

The accuracy of the RHOSVD approximation can be controlled by the given 
$\varepsilon$-threshold in truncated SVD of side matrices $U^{(\ell)}$.
The following theorem proves the absolute error bound for the RHOSVD approximation. 
\begin{theorem} \label{thm:7.3} (RHOSVD error bound, \cite{KhKh3:08}). 
For given ${\bf A}\in  \mbox{\boldmath{$\mathcal{C}$}}_{{ R},{\bf n}}$ 
in (\ref{eqn:can_A3}), 
let $\sigma_{\ell,1}\geq\sigma_{\ell,2} ...  \geq\sigma_{\ell,\min(n,R)}$
 be the  singular values of $\ell$-mode side matrices
$U^{(\ell)}\in \mathbb{R}^{n\times R}$ ($\ell=1,...,d$) with
normalized skeleton vectors. Then the error of RHOSVD approximation,
${\bf A}_{({\bf r})}^0$, is bounded by 
 \begin{equation}\label{error bound CtoT}
\|{\bf A}- {\bf A}_{({\bf r})}^0\| \leq \|\boldsymbol{\xi}\| \sum\limits_{\ell=1}^d
(\sum\limits_{k=r_\ell +1}^{\min(n,R)} \sigma_{\ell,k}^2)^{1/2},\quad 
\|\boldsymbol{\xi}\| = \sqrt{\sum\limits_{\nu =1}^{R} \xi_{\nu}^2}.
\end{equation}
\end{theorem}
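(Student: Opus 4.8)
\noindent\emph{Proof sketch / plan.} The plan is to split the multi-mode approximation error ${\bf A}-{\bf A}_{({\bf r})}^0$ into a sum of $d$ contributions, each coming from truncating the SVD in one single mode, and then to estimate every single-mode contribution by a one-dimensional argument that uses only the normalization $\|{\bf u}^{(\ell)}_\nu\|=1$ of the canonical vectors together with Cauchy--Schwarz. No full-format tensor and no matrix unfolding of ${\bf A}$ are needed.

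First I would observe that the truncated factor $W^{(\ell)}$ from (\ref{eq:svdr}) is exactly the orthogonal projection of the canonical side matrix $U^{(\ell)}$ onto its dominant $r_\ell$-dimensional left singular subspace, that is $W^{(\ell)}=P_\ell U^{(\ell)}$ with the orthogonal projector $P_\ell:={Z}_0^{(\ell)}({Z}_0^{(\ell)})^T$; this is immediate from (\ref{eqn:SVD_Ul})--(\ref{eq:svdr}) because $({Z}_0^{(\ell)})^T{Z}^{(\ell)}=[\,I_{r_\ell}\mid 0\,]$. Hence ${\bf A}_{({\bf r})}^0={\bf A}\times_1 P_1\times_2\cdots\times_d P_d$, and since mode-$k$ and mode-$\ell$ multiplications commute for $k\neq\ell$ one peels off the projectors one mode at a time to get the telescoping identity
$$
{\bf A}-{\bf A}_{({\bf r})}^0=\sum_{\ell=1}^d {\bf E}_\ell,\qquad
{\bf E}_\ell:={\bf A}\times_1 P_1\times_2\cdots\times_{\ell-1}P_{\ell-1}\times_\ell(I-P_\ell),
$$
where the modes $\ell+1,\dots,d$ of ${\bf E}_\ell$ are kept untouched. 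By the triangle inequality it then suffices to prove, for each $\ell$, that $\|{\bf E}_\ell\|\le \|\boldsymbol{\xi}\|\,\bigl(\sum_{k=r_\ell+1}^{\min(n,R)}\sigma_{\ell,k}^2\bigr)^{1/2}$.

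To bound $\|{\bf E}_\ell\|$ I would pass back to the canonical representation
$$
{\bf E}_\ell=\sum_{\nu=1}^R \xi_\nu\,{\bf w}^{(1)}_\nu\otimes\cdots\otimes{\bf w}^{(\ell-1)}_\nu\otimes\bigl[(I-P_\ell){\bf u}^{(\ell)}_\nu\bigr]\otimes{\bf u}^{(\ell+1)}_\nu\otimes\cdots\otimes{\bf u}^{(d)}_\nu,\qquad {\bf w}^{(k)}_\nu:=P_k{\bf u}^{(k)}_\nu,
$$
and expand $\|{\bf E}_\ell\|^2$ as a double sum over the skeleton indices $\nu,\mu$. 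Each summand factorizes over the $d$ modes into a product of inner products; since every ${\bf u}^{(k)}_\nu$ is a unit vector and every $P_k$ is an orthogonal projector, the mode-$k$ factor is bounded in modulus by $1$ for all $k\neq\ell$, while the mode-$\ell$ factor is at most $\|(I-P_\ell){\bf u}^{(\ell)}_\nu\|\,\|(I-P_\ell){\bf u}^{(\ell)}_\mu\|$. Discarding the cross-mode factors this way collapses the double sum to a perfect square,
$$
\|{\bf E}_\ell\|^2\le\Bigl(\sum_{\nu=1}^R |\xi_\nu|\,\|(I-P_\ell){\bf u}^{(\ell)}_\nu\|\Bigr)^2,
$$
and one further application of Cauchy--Schwarz in $\nu$ gives $\|{\bf E}_\ell\|\le\|\boldsymbol{\xi}\|\,\|(I-P_\ell)U^{(\ell)}\|_F$. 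Finally $\|(I-P_\ell)U^{(\ell)}\|_F^2=\sum_{k=r_\ell+1}^{\min(n,R)}\sigma_{\ell,k}^2$ follows at once from the truncated SVD (\ref{eqn:SVD_Ul}), and summing over $\ell$ yields (\ref{error bound CtoT}).

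The step I expect to be the crux is the treatment of the off-diagonal ($\nu\neq\mu$) cross terms in $\|{\bf E}_\ell\|^2$: the canonical vectors attached to distinct skeleton indices are in general neither orthogonal nor even linearly independent, so there is no orthogonality to exploit and one cannot reduce to a diagonal sum. The resolution is that replacing each mode-$k$ inner product ($k\neq\ell$) by the trivial bound $|\langle{\bf u}^{(k)}_\nu,{\bf u}^{(k)}_\mu\rangle|\le 1$ is precisely what converts the double sum into the square of an $\ell_1$-type quantity, after which normalization and Cauchy--Schwarz finish the estimate; this is also why the final bound carries the Frobenius-type tail $(\sum_{k>r_\ell}\sigma_{\ell,k}^2)^{1/2}$ rather than the single leading tail singular value $\sigma_{\ell,r_\ell+1}$. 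A routine point to record along the way is that mode-wise orthogonal projections commute and are norm-non-increasing, which legitimizes both the telescoping identity and the bounds $\|{\bf w}^{(k)}_\nu\|\le 1$.
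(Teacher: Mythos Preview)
Your proposal is correct and follows essentially the same route as the paper: a telescoping splitting of ${\bf A}-{\bf A}_{({\bf r})}^0$ into $d$ single-mode error terms, followed by reduction to a sum over the $R$ skeleton indices, the bound $\le 1$ on all non-$\ell$ mode factors coming from $\|{\bf u}^{(k)}_\nu\|=1$, Cauchy--Schwarz in $\nu$, and finally identification of $\sum_\nu\|(I-P_\ell){\bf u}^{(\ell)}_\nu\|^2$ with the Frobenius tail $\sum_{k>r_\ell}\sigma_{\ell,k}^2$. The only cosmetic differences are that the paper telescopes in the opposite order (untruncated factors $U^{(k)}$ to the left of mode $\ell$, truncated $W^{(k)}$ to the right), and that where you expand $\|{\bf E}_\ell\|^2$ as a double sum and bound cross terms, the paper simply applies the triangle inequality directly to the $R$ rank-one summands of ${\bf B}_\ell$, which reaches the same $\ell_1$-type bound $\sum_\nu|\xi_\nu|\,\|(I-P_\ell){\bf u}^{(\ell)}_\nu\|$ in one line.
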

\begin{proof} 
Using the contracted product representations of 
${\bf A}\in \mbox{\boldmath{$\mathcal{C}$}}_{{R},{\bf n}}$ and 
${\bf A}_{({\bf r})}^0 \in {\cal T}_{\bf r}$, 
and introducing the $\ell$-mode residual
\[
\Delta^{(\ell)}={U}^{(\ell)}-{Z}_0^{(\ell)} D_{\ell,0} {V_0^{(\ell)}}^T,\quad
\{\Delta^{(\ell)}\}_\nu=
 \sum\limits_{k=r_\ell +1}^n\sigma_{\ell,k} {\bf z}^{(\ell)}_k v^\ell_{k,\nu},\quad \nu=1,...,R,
\]
with notations
$
{V_0^{(\ell)}}=[{\bf v}^{(\ell)}_1,...,{\bf v}^{(\ell)}_{r_\ell}]^T, \quad 
{\bf v}^{(\ell)}_k=\{v^{\ell}_{k,\nu}\}_{\nu=1}^R\in \mathbb{R}^R,
$
we arrive at the following expansion for the approximation error in the form
\[
 {\bf A} - {\bf A}_{({\bf r})}^0 =   
  \boldsymbol{\xi} \times_1 U^{(1)}\times_2 \cdots \times_d U^{(d)} - 
  \boldsymbol{\xi} \times_1 W^{(1)}\times_2 \cdots \times_d W^{(d)}
:= \sum\limits_{\ell=1}^d {\bf B}_\ell, 
\]
where 
\begin{eqnarray*}
{\bf B}_\ell &=& \boldsymbol{\xi} \times_1 {U}^{(1)}\cdots
\times_{\ell-1} U^{(\ell-1)}\times_\ell \Delta^{(\ell)}\times_{\ell+1} W^{(\ell+1)}\cdots \times_d W^{(d)}\\
 &=& \sum\limits_{\nu=1}^R {\xi}_\nu \left[ {\bf u}^{(\ell)}_\nu  \cdots 
\times_{\ell-1}{\bf u}^{(\ell-1)}_\nu 
 \times_{\ell} \{\Delta^{(\ell)}\}_\nu
 \times_{\ell+1} \sum\limits_{k=1}^{r_{\ell+1}}\sigma_{\ell+1,k}
 {\bf z}^{(\ell+1)}_k v^{\ell+1}_{k,\nu}   \cdots 
\times_d \sum\limits_{k=1}^{r_d}\sigma_{d,k} {\bf z}^{(d)}_k v^{d}_{k,\nu}\right].
\end{eqnarray*}
This leads to the error bound (by the triangle inequality)
$$
 \| {\bf A} - {\bf A}_{({\bf r})}^0 \|  \leq  \sum\limits_{\ell=1}^d \|{\bf B}_\ell \|,
$$
providing the estimate (in view of $\|{\bf u}^{(\ell)}_\nu \|=1$, $\ell=1,...,d$, $\nu=1,...,R$)
\[
 \Vert {\bf B}_\ell \Vert\leq \sum\limits_{\nu=1}^R |{\xi}_\nu| 
(\sum\limits_{k=r_\ell +1}^n \sigma_{\ell,k}^2 (v^{\ell}_{k,\nu})^2)^{1/2}\,
(\sum\limits_{k=1}^{r_{\ell+1}}\sigma_{\ell+1,k}^2  (v^{\ell+1}_{k,\nu})^2)^{1/2} \cdots
 (\sum\limits_{k=1}^{r_d}\sigma_{d,k}^2  (v^{d}_{k,\nu})^2)^{1/2}.
\] 
Furthermore, since $U^{(\ell)} $ has normalized columns, i.e., 
$
 \|{\bf u}^{(\ell)}_\nu  \|=
\|\sum\limits_{k=1}^{n}\sigma_{\ell,k} {\bf z}^{(\ell)}_k  {v^{\ell}_{k,\nu}} \|=1,\quad \ell=1,...,d,
$
we obtain
$\sum\limits_{k=1}^{n}\sigma_{\ell,k}^2  (v^{\ell}_{k,\nu})^2 =1$ for
$\ell=1,...,d$ $\nu=1,...,R$.
Now the error estimate follows 
\begin{eqnarray*}
 \| {\bf A} - {\bf A}_{({\bf r})}^0 \| 
 & \leq & \sum\limits_{\ell=1}^d \sum\limits_{\nu=1}^R |{\xi}_\nu|
 \left(\sum\limits_{k=r_\ell +1}^n
   \sigma_{\ell,k}^2 (v^{\ell}_{k,\nu})^2\right)^{1/2}\\
 & \leq &   \sum\limits_{\ell=1}^d
 \left(\sum\limits_{\nu=1}^R {\xi}_\nu^2\right)^{1/2} 
\left(\sum\limits_{\nu=1}^R 
\sum\limits_{k=r_\ell +1}^n \sigma_{\ell,k}^2 (v^{\ell}_{k,\nu})^2\right)^{1/2} \\
& = & \sum\limits_{\ell=1}^d\|\boldsymbol{\xi}\|
\left(\sum\limits_{k=r_\ell +1}^n \sigma_{\ell,k}^2 
\sum\limits_{\nu=1}^R (v^{\ell}_{k,\nu})^2\right)^{1/2} 
= \|\boldsymbol{\xi}\| \sum\limits_{\ell=1}^d
\left(\sum\limits_{k=r_\ell +1}^n \sigma_{\ell,k}^2\right)^{1/2}.
\end{eqnarray*}
The case $R < n$ can be analyzed along the same line.
\end{proof}

The error estimate in Theorem \ref{thm:7.3} differs from the case of complete HOSVD 
by the extra factor $\|\boldsymbol{\xi}\|$, which is the payoff for the lack of orthogonality
in the canonical input tensor.
Hence Theorem \ref{thm:7.3} does not provide, in general, the stable control of relative error
since for the general canonical tensors there is no uniform upper bound on the constant $C$ in the estimate
\[
\|\boldsymbol{\xi}\| \leq C \|{\bf A} \|.
\]
The problem is that it applies to the non-orthogonal canonical decomposition.
The stable decomposition can be proven in the case of partially orthogonal 
or monotone decompositions. 
\begin{corollary} \label{cor:RHOSVD_stab} (Stability of RHOSVD)
Assume the conditions of Theorem \ref{thm:7.3} are satisfied. 
(A) Suppose that one  of the matrices
$U^{(\ell)}$, say $U^{(1)}$, is orthogonal. Then the RHOSVD error can be bounded by
\begin{equation}\label{errorRHOSVD_orthog}
\|{\bf A}- {\bf A}_{({\bf r})}^0\| \leq C \|{\bf A}\| \sum\limits_{\ell=1}^d
(\sum\limits_{k=r_\ell +1}^{\min(n,R)} \sigma_{\ell,k}^2)^{1/2},
\end{equation}
with the constant $C=1$. \\
(B)  Let decomposition (\ref{eqn:can_A3}) be monotone, i.e. 
all coefficients and skeleton vectors  have non-negative values. 
Then (\ref{errorRHOSVD_orthog}) holds with the constant $C$ that does not depend on ${\bf A}$.
 \end{corollary}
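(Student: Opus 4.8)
\noindent The plan is to deduce both claims directly from Theorem~\ref{thm:7.3} by bounding $\|\boldsymbol{\xi}\|$ against $\|{\bf A}\|$ in each of the two regimes. The common starting point is the exact identity for the Frobenius norm of the canonical tensor (\ref{eqn:can_A3}): expanding the scalar product bilinearly,
\[
\|{\bf A}\|^2=\sum_{\nu=1}^{R}\sum_{\mu=1}^{R}\xi_\nu\,\xi_\mu\prod_{\ell=1}^{d}\langle{\bf u}^{(\ell)}_\nu,{\bf u}^{(\ell)}_\mu\rangle .
\]
By the normalization $\|{\bf u}^{(\ell)}_\nu\|=1$, the diagonal part $\nu=\mu$ of this double sum equals $\sum_{\nu}\xi_\nu^2=\|\boldsymbol{\xi}\|^2$, so in both cases the task reduces to controlling the off-diagonal contribution.

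For part (A) I would use that orthogonality of $U^{(1)}$ means its $R$ columns are pairwise orthonormal, i.e. $\langle{\bf u}^{(1)}_\nu,{\bf u}^{(1)}_\mu\rangle=\delta_{\nu\mu}$ (this tacitly places us in the regime where the first-mode rank does not exceed $n$). Every off-diagonal summand above carries the factor $\langle{\bf u}^{(1)}_\nu,{\bf u}^{(1)}_\mu\rangle$ and hence vanishes, leaving $\|{\bf A}\|^2=\|\boldsymbol{\xi}\|^2$. Substituting the identity $\|\boldsymbol{\xi}\|=\|{\bf A}\|$ into the bound (\ref{error bound CtoT}) of Theorem~\ref{thm:7.3} gives (\ref{errorRHOSVD_orthog}) with $C=1$.

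For part (B) I would show that monotonicity forces the off-diagonal terms to be non-negative, so that they can only enlarge $\|{\bf A}\|$. Indeed, if all $\xi_\nu\ge 0$ and all entries of the skeleton vectors ${\bf u}^{(\ell)}_\nu$ are non-negative, then every mode-wise Gram entry $\langle{\bf u}^{(\ell)}_\nu,{\bf u}^{(\ell)}_\mu\rangle$ is non-negative, hence so is each product $\xi_\nu\xi_\mu\prod_{\ell}\langle{\bf u}^{(\ell)}_\nu,{\bf u}^{(\ell)}_\mu\rangle$; discarding the terms with $\nu\ne\mu$ yields $\|{\bf A}\|^2\ge\|\boldsymbol{\xi}\|^2$, i.e. $\|\boldsymbol{\xi}\|\le\|{\bf A}\|$. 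Feeding this into Theorem~\ref{thm:7.3} again produces (\ref{errorRHOSVD_orthog}), now with a constant $C=1$ that is independent of ${\bf A}$, which is even a bit stronger than what is asserted.

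The whole argument is elementary, and I do not anticipate a genuine obstacle. The only points deserving attention are the precise meaning of ``$U^{(1)}$ orthogonal'' (orthonormality of its columns, which implicitly restricts the first-mode rank) and the one-line verification that entry-wise non-negativity of the canonical factors propagates to non-negativity of the Gram matrices $(U^{(\ell)})^{T}U^{(\ell)}$; this positivity is the real heart of the monotone case, everything else being bookkeeping around Theorem~\ref{thm:7.3}.
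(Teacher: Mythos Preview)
Your proof is correct and follows essentially the same route as the paper: in both parts you reduce to Theorem~\ref{thm:7.3} by showing $\|\boldsymbol{\xi}\|\le\|{\bf A}\|$ (with equality in (A)), the paper doing this via the identity $\sum_\nu(u_\nu,u_\nu)\le(\sum_\nu u_\nu,\sum_\nu u_\nu)$ for rank-one summands with pairwise non-negative inner products, which is exactly your off-diagonal non-negativity argument written differently. Your explicit expansion of $\|{\bf A}\|^2$ and your remark that (B) actually yields $C=1$ make the argument slightly more transparent than the paper's version.
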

\begin{proof}
(A) The partial orthogonality assumption combined with normalization constraints for 
the canonical skeleton vectors imply 
\[
    \sum\limits_{\nu =1}^{R} \xi_{\nu}^2 =  \|\boldsymbol{\xi}\|^2 = \|{\bf A} \|^2,
\]
then the result follows by (\ref{error bound CtoT}). \\
(B) In case of {\it monotone} decomposition we conclude 
that the pairwise scalar product of all summands in (\ref{eqn:can_A3}) is non-negative, 
while the norm of each $\nu$-term  
is equal to $\xi_{\nu}$. Then the result follows by the upper bound
\[
 (u_1,u_1) +\cdots + (u_R,u_R) \leq (\sum\limits_{\nu =1}^{R} u_\nu,\sum u_\nu),
\]
which holds for vectors with non-negative entries applied in the case of $R$ summands. 
\end{proof}

Clearly, the orthogonality assumption may lead to slightly higher separation rank, 
however, this constructive decomposition stabilizes the RHOSVD approximation method applied to 
 the canonical format tensor (i.e., it allows the stable control of relative error).
The case of monotone canonical sums typically arises in the sinc-based canonical approximation 
to radially symmetric Green's kernels by a sum of Gaussians. 
On the other hand, in long term computational practice the numerical instability of RHOSVD 
approximation was not observed in case of physically relevant data.

\subsection{Mixed Tucker tensor format and Tucker-to-CP transform} \label{ssec:MixedTuckCp}
 
In the procedure for the canonical tensor rank reduction the goal is to
have a result in a canonical tensor format with a smaller rank.
By converting the core tensor to CP format, one can use 
the mixed two-level Tucker data format \cite{KhKh:06,VeKh_Diss:10},
or canonical CP format.
 Figure \ref{figs1:Tucker_Can} illustrates the computational scheme of the two-level Tucker approximation. 
 \begin{figure}[h]
 \centering
\includegraphics[width=10cm]{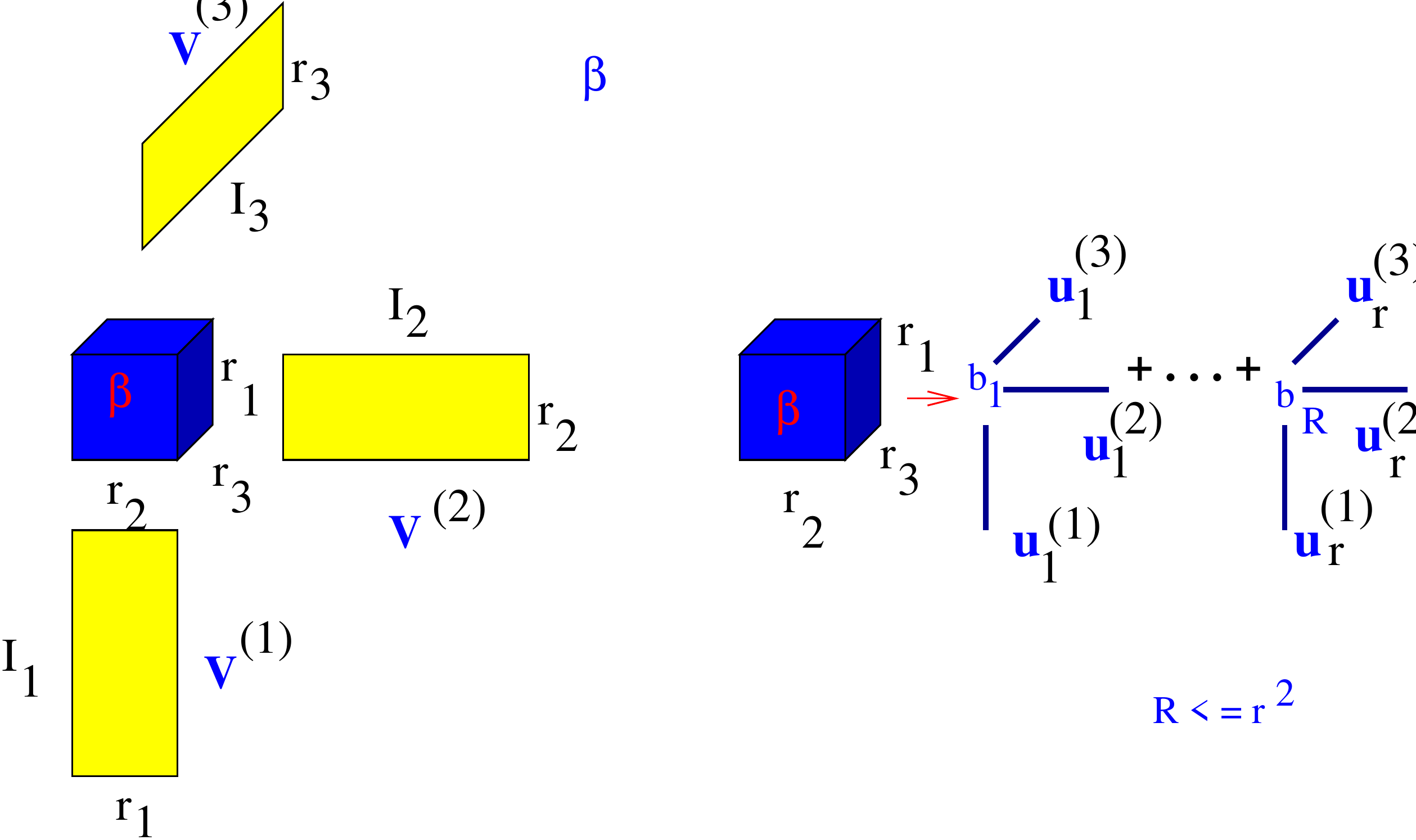} 
\caption{Mixed Tucker-to-canonical decomposition.}
\label{figs1:Tucker_Can}
\end{figure}

Next lemma describes the approximation of the  Tucker tensor by using 
canonical representation \cite{KhKh:06,VeKh_Diss:10}.
\begin{lemma}\label{lem:7.4} (Mixed Tucker-to-canonical  approximation, \cite{VeKh_Diss:10}).\\
(A) Let the target tensor ${\bf A}$  have the form
$
{\bf A}={\boldsymbol{\beta}}\times_{1}V^{(1)}\times_{2}
...   \times_{d} V^{(d)} \in \mbox{\boldmath{$\mathcal{T}$}}_{{\bf r},{\bf n}},
$
 with the orthogonal side-matrices
 $V^{(\ell)}=[{\bf v}^{(\ell)}_1 \ldots {\bf v}^{(\ell)}_{r_\ell} ]
\in \mathbb{R}^{ n \times  r_\ell }$  and  
${\boldsymbol{\beta}} \in \mathbb{R}^{r_1\times...\times r_d}$.
Then, for a given  $R \leq \min\limits_{1\leq \ell \leq d} \overline{r}_\ell$, 
\begin{equation} \label{two-level min}
  \min\limits_{{\bf Z} \in \mbox{\boldmath{$\mathcal{C}$}}_{R,{\bf n}}} \|{\bf A}- {\bf Z} \|
  = \min\limits_{{\boldsymbol{\mu}}\in 
\mbox{\boldmath{$\mathcal{C}$}}_{R,{\bf r}}} 
\|{\boldsymbol{\beta}}-{\boldsymbol{\mu}} \|,
\end{equation}
where $\overline{n}_\ell$ is the single-hole product of dimension-modes,
\begin{equation}
\label{eq:single_hole}
\overline{n}_\ell = 
n_1 \cdots  n_{\ell-1} n_{\ell+1}\cdots n_d
\end{equation}.
(B) Assume that there exists the best rank-$R$ approximation
$ {\bf A}_{(R)}\in \mbox{\boldmath{$\mathcal{C}$}}_{R,{\bf n}}$ of 
${\bf A}$, then there is the best rank-$R$  approximation 
$ {\boldsymbol{\beta}}_{(R)}\in \mbox{\boldmath{$\mathcal{C}$}}_{R,{\bf r}}$
of ${\boldsymbol{\beta}}$, such that
\begin{equation}\label{two-level relation}
 {\bf A}_{({R})}= {\boldsymbol{\beta}}_{(R)}\times_{1}V^{(1)}\times_{2} ... 
  \times_{d} V^{(d)}.
\end{equation}
\end{lemma}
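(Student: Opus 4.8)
The plan is to reduce everything to the single elementary fact that the linear map
$\Phi(\boldsymbol{\mu}):=\boldsymbol{\mu}\times_1 V^{(1)}\times_2\cdots\times_d V^{(d)}$,
from $\mathbb{R}^{r_1\times\cdots\times r_d}$ into $\mathbb{R}^{n\otimes d}$, is an isometry in the Frobenius norm (because each $V^{(\ell)}$ has orthonormal columns, so $(V^{(\ell)})^{T}V^{(\ell)}=I_{r_\ell}$), that its range $\mathcal{R}$ is the tensor product of the column spaces of the $V^{(\ell)}$, and that $\Phi$ preserves canonical rank: it clearly maps a sum of $R$ rank-one terms to a sum of $R$ rank-one terms, and conversely, applying the left inverse $(V^{(\ell)})^{T}$ mode-wise pulls any rank-$R$ decomposition of $\Phi(\boldsymbol{\mu})$ back to a rank-$\le R$ decomposition of $\boldsymbol{\mu}$. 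First I would also record that, since $\mathbf{A}=\boldsymbol{\beta}\times_1 V^{(1)}\times_2\cdots\times_d V^{(d)}=\Phi(\boldsymbol{\beta})$ lies in $\mathcal{R}$, the mode-wise orthogonal projector $\Pi$ built from $P^{(\ell)}:=V^{(\ell)}(V^{(\ell)})^{T}$ fixes $\mathbf{A}$, i.e. $\Pi\mathbf{A}=\mathbf{A}$, and that $\Pi$ maps $\mathbf{\mathcal{C}}_{R,\mathbf{n}}$ into itself (it acts mode-wise, hence termwise on rank-one tensors).

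For (A), the bound ``$\le$'' in (\ref{two-level min}) is immediate: for $\boldsymbol{\mu}\in\mathbf{\mathcal{C}}_{R,\mathbf{r}}$ we have $\Phi(\boldsymbol{\mu})\in\mathbf{\mathcal{C}}_{R,\mathbf{n}}$ and $\|\mathbf{A}-\Phi(\boldsymbol{\mu})\|=\|\Phi(\boldsymbol{\beta}-\boldsymbol{\mu})\|=\|\boldsymbol{\beta}-\boldsymbol{\mu}\|$. For ``$\ge$'', given any $\mathbf{Z}\in\mathbf{\mathcal{C}}_{R,\mathbf{n}}$ I would write $\mathbf{A}-\mathbf{Z}=(\mathbf{A}-\Pi\mathbf{Z})-(I-\Pi)\mathbf{Z}$, an orthogonal splitting into a part in $\mathcal{R}$ and a part in $\mathcal{R}^{\perp}$, so that the Pythagorean identity gives $\|\mathbf{A}-\mathbf{Z}\|^{2}=\|\mathbf{A}-\Pi\mathbf{Z}\|^{2}+\|(I-\Pi)\mathbf{Z}\|^{2}\ge\|\mathbf{A}-\Pi\mathbf{Z}\|^{2}$; since $\Pi\mathbf{Z}\in\mathbf{\mathcal{C}}_{R,\mathbf{n}}\cap\mathcal{R}=\Phi(\mathbf{\mathcal{C}}_{R,\mathbf{r}})$, we get $\|\mathbf{A}-\mathbf{Z}\|\ge\|\boldsymbol{\beta}-\Phi^{-1}(\Pi\mathbf{Z})\|\ge\inf_{\boldsymbol{\mu}\in\mathbf{\mathcal{C}}_{R,\mathbf{r}}}\|\boldsymbol{\beta}-\boldsymbol{\mu}\|$. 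Taking the infimum over $\mathbf{Z}$ yields equality of the two quantities in (\ref{two-level min}) (these are infima in general, and the ``$\min$'' notation is justified precisely in the situation of part (B)); the hypothesis $R\le\min_\ell\overline r_\ell$ only ensures that the reduced class $\mathbf{\mathcal{C}}_{R,\mathbf{r}}$ is nondegenerate and plays no role in the norm identity itself.

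For (B), I would feed the assumed minimizer $\mathbf{A}_{(R)}$ into the same splitting: $\|\mathbf{A}-\mathbf{A}_{(R)}\|^{2}=\|\mathbf{A}-\Pi\mathbf{A}_{(R)}\|^{2}+\|(I-\Pi)\mathbf{A}_{(R)}\|^{2}$ with $\Pi\mathbf{A}_{(R)}\in\mathbf{\mathcal{C}}_{R,\mathbf{n}}$, so minimality of $\mathbf{A}_{(R)}$ forces $(I-\Pi)\mathbf{A}_{(R)}=0$; hence $\mathbf{A}_{(R)}=\Pi\mathbf{A}_{(R)}=\Phi(\boldsymbol{\beta}_{(R)})$ for a unique $\boldsymbol{\beta}_{(R)}\in\mathbf{\mathcal{C}}_{R,\mathbf{r}}$ (uniqueness because the $V^{(\ell)}$ have full column rank), which is exactly the relation (\ref{two-level relation}). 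Combining the isometry with part (A), $\|\boldsymbol{\beta}-\boldsymbol{\beta}_{(R)}\|=\|\mathbf{A}-\mathbf{A}_{(R)}\|=\inf_{\boldsymbol{\mu}\in\mathbf{\mathcal{C}}_{R,\mathbf{r}}}\|\boldsymbol{\beta}-\boldsymbol{\mu}\|$, so $\boldsymbol{\beta}_{(R)}$ is a best rank-$R$ approximation of $\boldsymbol{\beta}$. I do not anticipate a genuine obstacle: the whole lemma is a transport-of-structure argument along $\Phi$, and the only steps demanding care are checking that $\Phi$ preserves canonical rank and that $\Pi$ keeps canonical tensors canonical --- both hinging on the columns of the $V^{(\ell)}$ being orthonormal, hence linearly independent, which is also what makes $\Pi$ a genuine orthogonal projector and legitimizes the Pythagorean step; the remainder is bookkeeping of modes and matrix shapes.
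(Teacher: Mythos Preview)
Your proposal is correct and follows essentially the same approach as the paper: the paper argues that the components $\mathbf{e}^{(\ell)}_k$ of each canonical vector orthogonal to $\mathrm{span}\{\mathbf{v}^{(\ell)}_1,\dots,\mathbf{v}^{(\ell)}_{r_\ell}\}$ ``do not affect the cost function'' and can therefore be dropped, which is exactly your Pythagorean step with the mode-wise projector $\Pi$, only phrased in coordinates rather than via the isometry $\Phi$. Your formulation in terms of $\Phi$ and $\Pi$ is somewhat cleaner---in particular, the paper's one-line justification that the orthogonal parts are harmless is made rigorous by your decomposition $\|\mathbf{A}-\mathbf{Z}\|^2=\|\mathbf{A}-\Pi\mathbf{Z}\|^2+\|(I-\Pi)\mathbf{Z}\|^2$---but the underlying mechanism (project onto the Tucker subspace, use orthonormality of the $V^{(\ell)}$ to preserve norms and ranks) is identical.
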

\begin{proof} (A)
The canonical vectors $ {\bf y}_k^{(\ell)}$ of any test 
element on the left-hand side of (\ref{two-level min}),
\begin{equation} \label{test elem}
{\bf Z} =  \sum\limits_{k=1}^{R}\lambda_k \; {\bf y}_{k}^{(1)}\otimes
... \otimes {\bf y}_{k}^{(d)} \in \mbox{\boldmath{$\mathcal{C}$}}_{R,{\bf n}},
\end{equation}
 can be chosen in $span\{ {\bf v}^{(\ell)}_1, \ldots, {\bf v}^{(\ell)}_{r_\ell}\}$, that means
\begin{equation} \label{span V}
{\bf y}^{(\ell)}_{k}=\sum\limits_{m=1}^{r_\ell}\mu_{k,m}^{(\ell)} {\bf v}^{(\ell)}_m,
\quad k=1, \ldots ,R, \; \ell=1,...,d.
\end{equation}
Indeed, assuming 
\[
{\bf y}^{(\ell)}_{k}=\sum\limits_{m=1}^{r_\ell}\mu_{k,m}^{(\ell)} {\bf v}^{(\ell)}_m + 
{\bf e}^{(\ell)}_{k} \quad \mbox{with} \quad 
{\bf e}^{(\ell)}_{k}\bot span\{ {\bf v}^{(\ell)}_1, \ldots, {\bf v}^{(\ell)}_{r_\ell}\},
\]
we conclude that ${\bf e}^{(\ell)}_{k}$ does not effect the cost function
in (\ref{two-level min}) because of the orthogonality of $ V^{(\ell)}$.
Hence, setting ${\bf e}^{(\ell)}_{k}=0$, and plugging (\ref{span V})
in (\ref{test elem}), we arrive at the desired Tucker decomposition of ${\bf Z}$,
$
 {\bf Z}= \boldsymbol{\beta}_z \times_1 V^{(1)} \times_2 \ldots \times_d V^{(d)},
\quad  \boldsymbol{\beta}_z \in \mbox{\boldmath{$\mathcal{C}$}}_{R,{\bf r}}.
$
This implies 
\begin{eqnarray*} 
\| {\bf A} - {\bf Z} \|^2 =  \| (\boldsymbol{\beta}_z -\boldsymbol{\beta} ) \times_1 V^{(1)} \times_2 
\ldots \times_d V^{(d)} \|^2 
 =  \| \boldsymbol{\beta} - \boldsymbol{\beta}_z \|^2 \geq 
\min\limits_{\boldsymbol{\mu} \in \mbox{\boldmath{$\mathcal{C}$}}_{R,{\bf r}}} 
\| \boldsymbol{\beta} -\boldsymbol{\mu} \|^2 .
\end{eqnarray*}
On the other hand, we have
\begin{eqnarray*} 
 \min\limits_{{\bf Z} \in \mbox{\boldmath{$\mathcal{C}$}}_{R,{\bf n}} } 
\| {\bf A} - {\bf Z} \|^2 \leq 
\min\limits_{ \boldsymbol{\beta}_z\in \mbox{\boldmath{$\mathcal{C}$}}_{R,{\bf r}} }
\| (\boldsymbol{\beta} - \boldsymbol{\beta}_z)  
\times_1 V^{(1)} \times_2 \ldots \times_d V^{(d)} \|^2
=  \min\limits_{ \boldsymbol{\mu} \in \mbox{\boldmath{$\mathcal{C}$}}_{R,{\bf r}} }
\| \boldsymbol{\beta} - \boldsymbol{\mu} \|^2.
\end{eqnarray*}
This proves (\ref{two-level min}). 

(B) Likewise, for any minimizer
${\bf A}_{(R)}\in  \mbox{\boldmath{$\mathcal{C}$}}_{R,{\bf n}}$ 
in the right-hand side in (\ref{two-level min}), one obtains 
\[
{\bf A}_{(R)} = \boldsymbol{\beta}_{(R)} \times_{1} V^{(1)}\times_{2} V^{(2)} ... 
  \times_{d} V^{(d)}
\]
  with the respective rank-$R$ core tensor
  $
  \boldsymbol{\beta}_{(R)} = \sum\limits_{k=1}^{R} \lambda_k
   {\bf u}_{k}^{(1)}\otimes ... \otimes {\bf u}_{k}^{(d)}\in
  \mbox{\boldmath{$\mathcal{C}$}}_{R,{\bf r}}.
  $
  Here ${\bf u}_{k}^{(\ell)}= \{\mu_{k,m_\ell}^{(\ell)} \}_{m_\ell=1}^{r_\ell}
  \in \mathbb{R}^{r_\ell}$,  are  calculated by plugging representation 
  (\ref{span V}) in (\ref{test elem}), and then by changing the order of summation,
\begin{align*}
 {\bf A}_{(R)}  &= \sum\limits_{k=1}^{R}\lambda_k  {\bf y}_{k}^{(1)}\otimes
... \otimes {\bf y}_{k}^{(d)}
=\sum\limits_{k=1}^{R}\lambda_k \left(\sum\limits_{m_1=1}^{r_1}
\mu_{k,m_1}^{(1)} {\bf v}^{(1)}_{m_1}\right)\otimes
... \otimes \left(\sum\limits_{m_d=1}^{r_d}\mu_{k,m_d}^{(d)} {\bf v}^{(d)}_{m_d}\right)  \\
&= \sum\limits_{m_1=1}^{r_1}...\sum\limits_{m_d=1}^{r_d}
\left\{\sum\limits_{k=1}^{R}\lambda_k
\prod\limits_{\ell=1}^d \mu_{k,m_\ell}^{(\ell)}\right\}
{\bf v}^{(1)}_{m_1}\otimes ...\otimes {\bf v}^{(d)}_{m_d}.
\end{align*}  
Now (\ref{two-level relation}) implies that
$$
\| {\bf A} - {\bf A}_R \| = \|\boldsymbol{\beta} - \boldsymbol{\beta}_{(R)} \|,
$$
since the $\ell$-mode multiplication with the orthogonal side matrices $V^{(\ell)}$
does not change the cost functional. 
Inspection of the left-hand side in (\ref{two-level min}) indicates that the
latter equation ensures that 
$\boldsymbol{\beta}_{(R)}$ is, in fact,  the minimizer of the right-hand side in (\ref{two-level min}).
\end{proof}

Combination of Theorem \ref{thm:7.3} and Lemma \ref{lem:7.4} opens the way to the
rank optimization of canonical tensors with the large mode-size arising, for example, 
in the grid-based numerical methods for multidimensional PDEs with non-regular (singular) solutions.
In such applications the univariate grid-size (i.e. the mode-size) 
may be about $n=10^4$ and even larger.

Notice that the Tucker (for moderate $d$) and canonical formats allow to perform basic multi-linear algebra  
using {\it one-dimensional operations},  
thus reducing the exponential scaling in $d$. Rank-truncated transforms between different formats can be applied in 
multi-linear algebra on mixed tensor representations as well, see Lemma \ref{lem:7.4}. 
The particular application to tensor convolution  in many dimensions was 
discussed, for example,  in \cite{Khor_book:18,Khor_bookQC:18}.

We summarize that the direct methods of tensor approximation can be classified by:
\begin{itemize}
 \item [(1)] {Analytic Tucker approximation} to some classes of function-related
$d$th order tensors ($d\geq 2$),  say, by multivariate polynomial interpolation \cite{Khor_book:18}.
\item [(2)] Sinc quadrature based approximation methods in the canonical format applied to a class of analytic 
function related tensors \cite{HaKhtens:04I}.
\item [(3)] Truncated HOSVD and  RHOSVD, for quasi-optimal Tucker approximation of the full-format, respectively, 
canonical tensors \cite{KhKh3:08}.
\end{itemize} 
Direct analytic approximation methods by sinc quadrature/interpolation are of principal importance.
Basic examples are given by the tensor representation of Green's kernels, 
the elliptic operator inverse and analytic matrix-valued functions.
In all cases the algebraic methods for rank reduction by the ALS-type iterative Tucker/canonical approximation can be applied.

Further improvement and enhancement of algebraic tensor approximation methods can be based on the combination of
advanced nonlinear iteration, multigrid tensor methods, greedy algorithms, 
hybrid tensor representations and the use of new problem adapted tensor formats.

\subsection{Tucker-to-canonical transform} \label{ssec:Tuck2Can}

In the rank reduction scheme for the canonical rank-$R$ tensors, 
we use successively the canonical-to-Tucker (C2T) transform and then the
Tucker-to-canonical (T2C) tensor approximation. 

First, we notice that the canonical rank of a tensor  ${\bf A} \in \mathbb{V}_{\bf n}$ has the upper bound, see 
\cite{VeKh_Diss:10,KhKh3:08},
\begin{equation}
\label{can_tensor_rank}
 R\leq \min\limits_{1\leq \ell \leq d} \overline{n}_\ell,
\end{equation}
where $\overline{n_\ell}$ is given by (\ref{eq:single_hole}).
 Rank bound (\ref{can_tensor_rank}) applied to the Tucker core tensor of the size $r\times r \times r$, indicates that the ultimate 
canonical rank of a large-size tensor in $\mathbb{V}_{\bf n}$ has the upper bound $r^2$.

The following remark shows that the maximal canonical rank of the Tucker core of 3rd order
tensor can be easily reduced to the value less than $r^2$ by the SVD-based procedure applied to the matrix slices of the Tucker core 
tensor $\boldsymbol{\beta}$.
Though, being not practically attractive for arbitrary high order tensors, 
the simple algorithm described in Remark \ref{rem_can_red} below is proved to be
 useful for the treatment of small size 3rd order Tucker core tensors
within the rank reduction algorithms described in the previous sections. 
\begin{remark}\label{rem_can_red} \cite{VeKh_Diss:10,KhKh3:08}
Let $d=3$ for the sake of clearness.
There is a simple procedure based on SVD to reduce the
canonical  rank of the core tensor $\boldsymbol{\beta}$, within the accuracy $\varepsilon >0 $.  
Denote by $B_m\in \mathbb{R}^{r\times r}$, $m=1,\ldots ,r$
the two-dimensional slices of $\boldsymbol{\beta}$ in each fixed mode and represent
\begin{equation}  \label{T2C_slice}
\boldsymbol{\beta} =\sum\limits_{m=1}^r B_m \otimes {\bf z}_m, \quad {\bf z}_m \in \mathbb{R}^r,
\end{equation}
where ${\bf z}_m(m)=1$, ${\bf z}_m(j)=0$ for $j=1,\ldots ,r$, $j\neq m$ (there are exactly $d$ possible
decompositions). Let $p_m$ be the minimal integer, such that the singular values of
$B_m$ satisfy $\sigma^{(m)}_k \leq \frac{\varepsilon}{r^{3/2}} $ for $k = p_{m}+1,...,r$
(if $\sigma^{(m)}_r > \frac{\varepsilon}{r^{3/2}}$, then set $p_m=r$). Then, denoting by
\begin{equation*}
 \label{T2C_SVD}
B_{p_m}= \sum\limits_{k_m=1}^{p_m}\sigma^{(m)}_{k_m} {\bf u}_{k_m}\otimes {\bf v}_{k_m},
\end{equation*}
the corresponding rank-$p_m$ approximation to $B_m$ (by truncation
of $\sigma^{(m)}_{p_m+1},\ldots ,\sigma^{(m)}_{r}$), we arrive at the
rank-$R$ canonical approximation to $\boldsymbol{\beta}$,
\begin{equation} \label{beta_R}
\boldsymbol{\beta}_{(R)} := \sum\limits_{m=1}^r B_{p_m} \otimes {\bf z}_m, \quad
{\bf z}_m \in \mathbb{R}^r,
\end{equation}
providing the error estimate
\[
\|\boldsymbol{\beta}- \boldsymbol{\beta}_{(R)} \| \leq \sum\limits^r_{m=1}
\| B_m - B_{p_m} \| = \sum\limits^r_{m=1} \sqrt{\sum\limits^r_{k_m =p_{m}+1} (\sigma^{(m)}_{k_m}})^2
\leq \sum\limits^r_{m=1}\sqrt{r \frac{\varepsilon^2}{r^3} } = \varepsilon
\]
Representation (\ref{beta_R}) is a sum of rank-$p_m$ terms so that the total rank is bounded by
$R \leq p_1+...+p_r \leq r^2$. 
The approach can be extended to arbitrary $d\geq 3 $ with the bound $R \leq r^{d-1}$.
\end{remark}
\begin{figure}[tbh]
  \begin{center}
  \includegraphics[width=10cm]{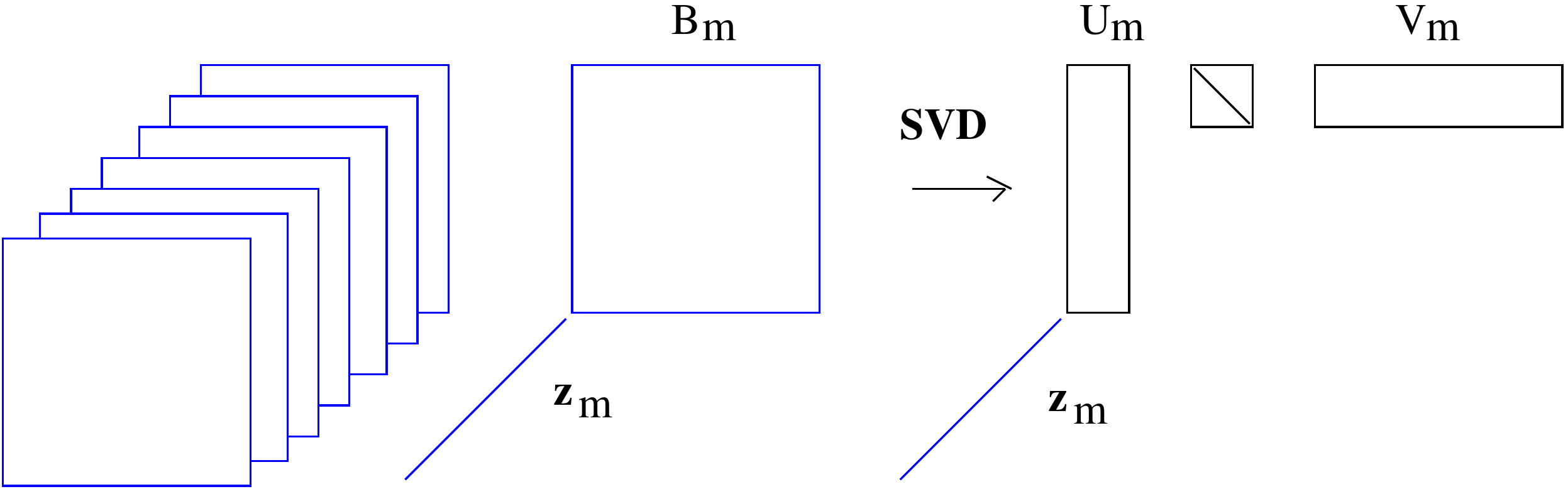}
\end{center}
\caption{ Tucker-to-canonical decomposition for a small core tensor, see Remark \ref{rem_can_red}.}    
\label{fig:C2T_slices} 
\end{figure}
 
Figure \ref{fig:C2T_slices} illustrates the canonical decomposition of the core tensor by
using the SVD of slices $B_m$ of the core tensor $\boldsymbol{\beta}$, yielding   
 matrices $U_m =\{{\bf u}_{k_m}\}_{k=1}^{p_m}$, $V_m =\{{\bf v}_{k_m}\}_{k=1}^{p_m}$ and 
 a diagonal  matrix of small size $ p_m\times p_m$ containing the truncated singular values. 
 It also shows the vector ${\bf z}_m =[0,\ldots, 0, 1, 0, \ldots, 0]$, 
 containing all entries equal to $0$ except $1$ at the m-th position.

 \section{Calculation of 3D  integrals with the Newton kernel}\label{ssec:Multi_Int}

The first application of the RHOSVD was calculation of the 3D grid-based Hartree potential operator 
in the Hartree-Fock equation,
\begin{equation}\label{eq:HP}
{V_{H}({ x}):= \int_{\mathbb{R}^3}
\frac{\rho({ y})}{\|{ x}-{ y}\|}\, d{ y}},
\end{equation} 
where the electron density, 
\begin{equation}\label{eq:rho}
 \rho(x) = 2 \sum\limits_{a=1}^{N_{orb}}(\varphi_a )^2
\end{equation},  
is represented in terms of molecular orbitals,
presented in the Gaussian-type  basis (GTO),
\[
\varphi_a(x)=\sum\limits_{k =1}^{N_b} c_{a, k } g_k(x)
\] 
The Hartree potential describes the repulsion energy of the electrons in a molecule.
The intermediate goal here is the calculation of the so-called Coulomb matrix, 
\[
J_{km}:=\int_{\mathbb{R}^3} g_k({x}) g_m({x}) V_H({x}) d {x}, \quad
k, m =1,\ldots N_b\quad x \in \mathbb{R}^3,
\]
which represents the Hartree potential in the given GTO basis.
 
In fact, calculation of this 3D convolution 
operator  with the Newton kernel, requires high accuracy and it should be repeated  multiply in the course 
of the iterative solution of the Hartree-Fock nonlinear eigenvalue problem. 
The presence of nuclear cusps in the electron density 
makes additional challenge to computation of the Hartree potential operator. Traditionally, these calculations  
are based on involved analytical evaluation of the corresponding integral in a separable Gaussian basis set 
by using erf function.  
Tensor-structured calculation of the multidimensional convolution integral operators with the Newton kernel 
have been introduced in \cite{KhKh3:08,KKF:09,VeKh_Diss:10}.
%

The molecule is embedded in a computational box $ \Omega=[-b,b]^3 \in \mathbb{R}^3$. 
The equidistant $n\times n \times n$ tensor grid
$\omega_{{\bf 3},n}=\{x_{\bf i}\} $, ${\bf i} \in {\cal I} :=\{1,...,n\}^3 $,
with the mesh-size $h=2b/(n+1)$ is used. 
In calculations of integral terms, 
 the Gaussian basis functions $g_k (x),\; x\in\mathbb{R}^3$,
are approximated by sampling their values at the centers of discretization intervals  using one-dimensional 
piecewise constant basis functions
$g_k (x) \approx  \overline{g}_k (x)=\prod^3_{\ell=1} \overline{g}_k^{(\ell)} (x_{\ell})$, $\ell=1,  2,  3$, 
yielding their rank-$1$ tensor representation, \cn
\begin{equation}
\label{disc_GTO}
 {\bf G}_k= {\bf g}_k^{(1)} \otimes {\bf g}_k^{(2)} \otimes {\bf g}_k^{(3)}\in \mathbb{R}^{n\times n \times n},
 \quad k=1,...,N_b.
\end{equation}
 Given the discrete tensor representation of basis functions (\ref{disc_GTO}), the electron density is 
approximated using 1D Hadamard products of rank-$1$ tensors as
\begin{equation}
 \label{eq:rho_ten}
\rho \approx \Theta=2 \sum\limits^{N_{orb}}_{a=1} \sum\limits^{N_b}_{k=1}
\sum\limits^{N_b}_{m=1}
c_{a, m}c_{a, k}({\bf g}_{  k}^{(1)} \odot {\bf g}_{  m}^{(1)}) \otimes 
\cdots
%
\otimes ({\bf g}_{  k}^{(3)} \odot {\bf g}_{  m}^{(3)})
\in \mathbb{R}^{n\times n \times n}.
\end{equation}
For convolution   operator, the representation   of the Newton kernel $\frac{1}{\|{ x}-{ y}\|}$ by a canonical rank-$R_N$
tensor \cite{Khor_book:18} is used (see section \ref{ssec:Theory_RS_sinc}  for details),
\begin{equation} \label{eqn:NewtCan}
{\bf P}_R=
\sum\limits_{q=1}^{R_N} {\bf p}^{(1)}_q \otimes {\bf p}^{(2)}_q \otimes {\bf p}^{(3)}_q
\in \mathbb{R}^{n\times n \times n}.
\end{equation}
 The initial rank of the electron density in the canonical tensor format $ \Theta$ in (\ref{eq:rho}) 
is large even for small molecules.
Rank reduction by using RHOSVD C2T plus T2C reduces the rank
$ \Theta \mapsto \Theta'$ by several orders of magnitude, from $N_b^2/2$ to $R_{\rho} \ll N_b^2/2$, 
from $\sim 10^4$ to $\sim 10^2$ .
Then the 3D tensor representation of the Hartree potential is calculated by using
the 3D tensor product convolution, which is a sum of tensor products of 1D convolutions, 
\[
 V_H \approx {\bf V}_H = \Theta' \ast {\bf P}_R =
\sum\limits_{ m= 1}^{R_{\rho}}
\sum\limits_{q=1}^{R_N} c_m \left(  {\bf u}^{(1)}_m \ast {\bf p}^{(1)}_q \right) \otimes
\left(  {\bf u}^{(2)}_m \ast {\bf p}^{(2)}_q \right) 
\otimes
\left(  {\bf u}^{(3)}_m \ast {\bf p}^{(3)}_q \right).
\]
The Coulomb matrix entries $J_{k m} $ are obtained by 1D scalar products
of ${\bf V}_H$ with the Galerkin basis consisting of rank-$1$ tensors,
\[
J_{km}
\approx \langle {\bf G}_k \odot {\bf G}_m, {\bf V}_H \rangle,
\quad k, m =1,\ldots N_b.
\]
The cost of 3D tensor product convolution is $O( n\log n)$ instead of $O(n^3 \log n)$ for the
standard benchmark 3D convolution using the 3D FFT.  Table \ref{tab:TConvsFFT} shows CPU times (sec)  for
the Matlab computation of $V_H$ for H$_2$O molecule \cite{KhKh3:08} on a SUN station using 8 Opteron
Dual-Core/2600 processors (times for 3D FFT  for $n\geq 1024$ are obtained by extrapolation).
C2T shows the time for the canonical-to-Tucker rank reduction.
 
 The grid-based tensor calculation of the multidimensional integrals in
quantum chemistry provides the required high accuracy by using large grids and
the ranks are controlled by the required $\varepsilon $ in the rank truncation
algorithms. 
The results of the tensor-based calculations have been compared with the results
of the benchmark standard computations by the MOLPRO package. It was shown
that the accuracy is of the order of $10^{-7}$ hartree in the resulting ground state energy, 
see \cite{VeKh_Diss:10,Khor_bookQC:18}. 
\begin{table}[tbh]
\begin{center}%
\begin{tabular}
[c]{|r|r|r|r|r|r|}%
\hline
$n^3$      & $1024^3$ & $2048^3$ & $4096^3$ & $8192^3$ & $16384^3$\\
 \hline
FFT$_3$      &  $\sim 6000$ & --&--& -- & $\sim 2$ years\\
 \hline
$C\ast C$     & $8.8$ & $20.0$ & $61.0 $ & $157.5 $ & $ 299.2 $\\
\hline
{\footnotesize C2T }     & $6.9$ & $10.9$ & $20.0$ & $37.9$ & $86.0$\\
\hline
 \end{tabular}
\caption{Times (sec) for the C2T transform and the 3D tensor product convolution vs. 3D FFT convolution.}
\end{center}
\label{tab:TConvsFFT}
\end{table}

 \section{RHOSVD in the range-separated tensor formats}\label{sec:RHOSVD_RSf}
 
 The range-separated (RS) tensor formats have been introduced in \cite{BKK_RS:17} as the constructive tool
 for low-rank tensor representation (approximation) of function related data discretized on Cartesian grids in $\mathbb{R}^d$, 
 which may have multiple  singularities or cusps. 
 Such highly non-regular data typically arise in computational quantum chemistry, in many-particle  
 dynamics simulations and many-particle electrostatics calculations, in protein modeling and in data science.
 The key idea of the RS representation is the splitting of the short- and long-range parts in the functional data and 
 further low-rank approximation of the rather regular long-range part in the classical tensor formats.
 
 In this concern RHOSVD method becomes  an essential ingredient of the rank reduction algorithms for the  
 ``long-range'' input tensor, which usually inherits the large initial rank.

 \subsection{Low-rank approximation of radial functions} \label{ssec:Theory_RS_sinc}
 
 First, we recall the grid-based method for the low-rank canonical  
representation of a spherically symmetric kernel functions $p(\|x\|)$, 
$x\in \mathbb{R}^d$ for $d=2,3,\ldots$, by its projection onto the finite set
of basis functions defined on tensor grid. 
The approximation theory by a sum of Gaussians for the class of analytic potentials 
$p(\|x\|)$  was presented in 
\cite{Stenger:93,HaKhtens:04I,Khor1:06,Khor_book:18}.
The particular numerical schemes for rank-structured representation of the Newton 
and Slater  kernels 
\begin{equation}\label{eqn:NewtYukaw}
p(\|x\|)=\frac{1}{4 \pi \|x\|},\quad  \mbox{  and  }\quad
p(\|x\|)=e^{-\lambda \|x\|},\quad
 x\in \mathbb{R}^3,
\end{equation}
discretized on a fine 3D  Cartesian grid
in the form of low-rank canonical tensor was described in 
\cite{HaKhtens:04I,Khor1:06}.

In what follows, for the ease of exposition, we confine ourselves to the case $d=3$.
In the computational domain  $\Omega=[-b,b]^3$, 
let us introduce the uniform $n \times n \times n$ rectangular Cartesian grid $\Omega_{n}$
with mesh size $h=2b/n$ ($n$ even).
Let $\{ \psi_\textbf{i} =\prod_{\ell=1}^3 \psi_{i_\ell}^{(\ell)}(x_\ell) \}$ 
be a set of tensor-product piecewise constant basis functions, 
labeled by  the $3$-tuple index ${\bf i}=(i_1,i_2,i_3)$, 
$i_\ell \in I_\ell=\{1,...,n\}$, $\ell=1,\, 2,\, 3 $.
The generating kernel $p(\|x\|)$ is discretized by its projection onto the basis 
set $\{ \psi_\textbf{i}\}$
in the form of a third order tensor of size $n\times n \times n$, defined entry-wise as
\begin{equation}  \label{eqn:galten}
\mathbf{P}:=[p_{\bf i}] \in \mathbb{R}^{n\times n \times n},  \quad
 p_{\bf i} = 
\int_{\mathbb{R}^3} \psi_{\bf i} ({x}) p(\|{x}\|) \,\, \mathrm{d}{x}.
\end{equation}

The low-rank canonical decomposition of the $3$rd order tensor $\mathbf{P}$ is based 
on using exponentially fast convergent 
$\operatorname*{sinc}$-quadratures for approximating the Laplace-Gauss transform 
to the analytic function $p(z)$, $z \in \mathbb{C}$, specified by a certain weight $\widehat{p}(t) >0$,
\begin{align} \label{eqn:laplace} 
p(z)=\int_{\mathbb{R}_+} \widehat{p}(t) e^{- t^2 z^2} \,\mathrm{d}t \approx
\sum_{k=-M}^{M} p_k e^{- t_k^2 z^2} \quad \mbox{for} \quad |z| > 0,\quad z \in \mathbb{R},
\end{align} 
with the proper choice of the quadrature points $t_k$ and weights $p_k$.
 The $sinc$-quadrature based approximation to generating function by 
using the short-term Gaussian sums in (\ref{eqn:laplace})  
are applicable to the class of analytic functions
in certain strip $|z|\leq D $ in the complex plane, such that on the real axis these functions decay
polynomially or exponentially. We refer to basic results in 
\cite{Stenger:93,Braess:BookApTh,HaKhtens:04I}, 
where the exponential convergence of the $sinc$-approximation in the number of terms 
(i.e., the canonical rank) was analyzed for certain classes of analytic integrands.

Now, for any fixed $x=(x_1,x_2,x_3)\in \mathbb{R}^3$, 
such that $\|{x}\| > a > 0$, 
we apply the $\operatorname*{sinc}$-quadrature approximation (\ref{eqn:laplace})  
to obtain the separable expansion
\begin{equation} \label{eqn:sinc_Newt}
 p({\|{x}\|}) =   \int_{\mathbb{R}_+} \widehat{p}(t)
e^{- t^2\|{x}\|^2} \,\mathrm{d}t  \approx 
\sum_{k=-M}^{M} p_k e^{- t_k^2\|{x}\|^2}= 
\sum_{k=-M}^{M} p_k  \prod_{\ell=1}^3 e^{-t_k^2 x_\ell^2},
\end{equation}
providing an exponential convergence rate in $M$,
\begin{equation} \label{eqn:sinc_conv}
\left|p({\|{x}\|}) - \sum_{k=-M}^{M} p_k e^{- t_k^2\|{x}\|^2} \right|  
\le \frac{C}{a}\, \displaystyle{e}^{-\beta \sqrt{M}},  
\quad \text{with some} \ C,\beta >0.
\end{equation}

In the case of Newton kernel, we have $p(z)=1/z$, $\widehat{p}(t)=\frac{2}{\sqrt{\pi}}$, so that
the Laplace-Gauss transform representation reads
\begin{equation} \label{eqn:Laplace_Newton}
 \frac{1}{z}= \frac{2}{\sqrt{\pi}}\int_{\mathbb{R}_+} e^{- z^2 t^2 } dt, \quad 
 \mbox{where}\quad z=\|x\|, \quad x \in \mathbb{R}^3,
\end{equation}
which can be approximated by the sinc quadrature (\ref{eqn:sinc_Newt}) with the particular choice 
of quadrature points $t_k$, providing the exponential convergence rate as 
in (\ref{eqn:sinc_conv}), \cite{HaKhtens:04I,Khor1:06}.

In the case of Yukawa potential the Laplace Gauss transform reads   
 \begin{equation} \label{eqn:Laplace_Yukawa}
  \frac{e^{-\kappa z}}{z}= \frac{2}{\sqrt{\pi}}\int_{\mathbb{R}_+} e^{-\kappa^2/t^2} e^{- z^2 t^2 } dt, \quad 
 \mbox{where}\quad z=\|x\|, \quad x \in \mathbb{R}^3.
 \end{equation}
 The analysis of the sinc quadrature approximation error for this case can 
 be found, in particular, in \cite{Khor1:06,Khor_book:18}, \S2.4.7.

Combining \eqref{eqn:galten} and \eqref{eqn:sinc_Newt}, and taking into account the 
separability of the Gaussian basis functions, we arrive at the low-rank 
approximation to each entry of the tensor $\mathbf{P}=[p_{\bf i}]$,
\begin{equation*} \label{eqn:C_nD_0}
 p_{\bf i} \approx \sum_{k=-M}^{M} p_k   \int_{\mathbb{R}^3}
 \psi_{\bf i}({x}) e^{- t_k^2\|{x}\|^2} \mathrm{d}{x}
=  \sum_{k=-M}^{M} p_k  \prod_{\ell=1}^{3}  \int_{\mathbb{R}}
\psi^{(\ell)}_{i_\ell}(x_\ell) e^{- t_k^2 x^2_\ell } \mathrm{d} x_\ell.
\end{equation*}
Define the vector (recall that $p_k >0$) 
\begin{equation} \label{eqn:galten_int}
\textbf{p}^{(\ell)}_k
= p_k^{1/3} \left[b^{(\ell)}_{i_\ell}(t_k)\right]_{i_\ell=1}^{n_\ell} \in \mathbb{R}^{n_\ell}
\quad \text{with } \quad b^{(\ell)}_{i_\ell}(t_k)= 
\int_{\mathbb{R}} \psi^{(\ell)}_{i_\ell}(x_\ell) e^{- t_k^2 x^2_\ell } \mathrm{d}x_\ell,
\end{equation}
then the $3$rd order tensor $\mathbf{P}$ can be approximated by 
the $R$-term ($R=2M+1$) canonical representation
\begin{equation} \label{eqn:sinc_general}
    \mathbf{P} \approx  \mathbf{P}_R =
\sum_{k=-M}^{M} p_k \bigotimes_{\ell=1}^{3}  {\bf b}^{(\ell)}(t_k)
= \sum\limits_{k=-M}^{M} {\bf p}^{(1)}_k \otimes {\bf p}^{(2)}_k \otimes {\bf p}^{(3)}_k
\in \mathbb{R}^{n\times n \times n}, \quad {\bf p}^{(\ell)}_k \in \mathbb{R}^n.
\end{equation}
Given a threshold $\varepsilon >0 $, in view of (\ref{eqn:sinc_conv}), we can chose  
$M=O(\log^2\varepsilon )$ such that in the max-norm
\begin{equation*} \label{eqn:error_control}
\| \mathbf{P} - \mathbf{P}_R \|  \le \varepsilon \| \mathbf{P}\|.
\end{equation*}

In the case of continuous radial function $p({\|{x}\|})$, say the Slater potential, 
we use the collocation type discretization at 
the grid points including the origin, $x=0$,  so that the univariate mode size becomes $n \to n_1=n+1$. 
In what follows, we use the same notation $\mathbf{P}_R$ in the case of collocation type tensors 
(for example, the Slater potential) so that
the particular meaning becomes clear from the context.

\subsection{The RS tensor format revisited} \label{ssec:RS_form}

 The range separated (RS) tensor format was introduced in \cite{BKK_RS:17} for efficient representation of the 
collective free-space electrostatic potential of large  biomolecules. This rank-structured tensor
representation of the collective electrostatic potential of many-particle systems of general
type allows to reduce essentially computation of their interaction energy, 
and it provides convenient form for performing other algebraic transforms.
The RS format proved to be useful for range-separated tensor representation 
of the Dirac delta \cite{BKhor_Dirac:18} in $\mathbb{R}^d$ and based on that, for regularization 
 of the Poisson-Boltzmann equation (PBE)
 by decomposition of the solution into short- and long-range parts, 
where the short-range part of the solution is evaluated by simple tensor operations without solving the PDE.
The smooth long-range part is calculated by solving the PBE with the  modified right-hand side 
by using the RS decomposition of the Dirac delta, so that now it does not contain singularities.
We refer to papers \cite{BeKhKhKwSt:18,KKKSB:21} describing the approach in details.

First, we recall the definition of the range separated (RS) tensor format, see \cite{BKK_RS:17}, 
for representation of $d$-tensors ${\bf A}  \in \mathbb{R}^{n_1\times \cdots \times n_d}$. 
 The RS format is served for the hybrid tensor approximation 
 of discretized functions with multiple cusps or singularities. 
 This allows the splitting of the target tensor onto the highly localized components 
 approximating the singularity and the component with global support that allows the low-rank tensor approximation.
 Such functions typically arise in computational quantum chemistry, in many-particle modeling 
 and in the interpolation of
 multidimensional data measured at certain set of spacial points in $\mathbb{R}^{n \times n \times n}$.
  
  \begin{definition}\label{Def:RS-Can_format} (RS-canonical tensors, \cite{BKK_RS:17}).
 The RS-canonical tensor format defines the class of $d$-tensors 
 ${\bf A}  \in \mathbb{R}^{n_1\times \cdots \times n_d}$,
represented as a sum of a rank-${R_l}$ CP tensor 
$
 {\bf U}_{long}= {\sum}_{k =1}^{R_l} \xi_k {\bf u}_k^{(1)}  \otimes \cdots \otimes {\bf u}_k^{(d)},
$
and a cumulated CP tensor ${\bf U}_{short}= {\sum}_{\nu =1}^{N_0} c_\nu {\bf U}_\nu$, such that
\begin{equation}\label{eqn:RS_Can}
 {\bf A} = {\bf U}_{long} + {\bf U}_{short}= {\sum}_{k =1}^{R_l} \xi_k {\bf u}_k^{(1)}  
 \otimes \cdots \otimes {\bf u}_k^{(d)} +
 {\sum}_{\nu =1}^{N_0} c_\nu {\bf U}_\nu, 
\end{equation}
where ${\bf U}_{short}$ is
generated by the localized reference CP tensor ${\bf U}_0$, i.e., ${\bf U}_\nu = \mbox{Replica} ({\bf U}_0)$,
with $\mbox{rank} ({\bf U}_\nu)= \mbox{rank}({\bf U}_0) \leq R_s$, 
where, given the threshold $\delta >0$,  the effective support of ${\bf U}_\nu$ is bounded by 
$\mbox{diam}(\mbox{supp}{\bf U}_\nu)\leq 2 n_\delta$ in the index size, 
where $n_\delta= O(1)$ is a small integer.
\end{definition}
Each RS-canonical tensor is, therefore, uniquely defined by the following parametrization:
rank-$R_l$ canonical tensor ${\bf U}_{long}$, the rank-$R_s$ reference canonical tensor ${\bf U}_0$
with the small mode size bounded by $2n_\delta$, list ${\cal J}$ of the coordinates and weights of $N_0$ 
particles in $\mathbb{R}^d$. The storage size is linear in both the dimension and the univariate grid size, 
$$
\mbox{stor}({\bf A})\leq d R_l n + (d+1)N_0 + d R_s n_\delta.
$$
The main benefit of the RS-canonical  tensor decomposition is the almost uniform bound on the CP/Tucker rank 
of the long-range part ${\bf U}_{long}= {\sum}_{k =1}^{R_l} \xi_k {\bf u}_k^{(1)}  \otimes \cdots \otimes {\bf u}_k^{(d)}$, 
in the multi-particle potential discretized on fine $n \times n \times n$ spatial grid. It was proven in \cite{BKK_RS:17}
that the canonical rank $R$ scales logarithmically in both the number of particles $N_0$ and the approximation precision,
see also Lemma \ref{prop:Slater_LR_rank}. 

Given the rank-$R$ CP decomposition (\ref{eqn:sinc_general}) based on the 
sinc-quadrature approximation (\ref{eqn:sinc_Newt}) of the discretized radial function 
$p(\|{ x} \|)$, we define the two subsets of indices, ${\cal K}_l:=\{k: t_k\leq 1\}$ and 
${\cal K}_s:=\{k: t_k > 1\}$, and then
introduce the RS-representation of this tensor as follows,
\begin{equation}\label{eqn:RS_repres}
  {\bf P}_R= {\bf P}_{R_l} + {\bf P}_{R_s}, \quad R=R_l+R_s, \quad R_l = \# {\cal K}_l, \quad R_s = \# {\cal K}_s,
\end{equation}
where
\[
  {\bf P}_{R_l}:=   \sum\limits_{k\in {\cal K}_l} {\bf p}^{(1)}_k \otimes {\bf p}^{(2)}_k \otimes {\bf p}^{(3)}_k, \quad
  {\bf P}_{R_s}:=   \sum\limits_{k\in {\cal K}_s} {\bf p}^{(1)}_k \otimes {\bf p}^{(2)}_k \otimes {\bf p}^{(3)}_k.
\]
 This representation allows to reduce the calculation of the multiparticle interaction energy
of the many-particle system. Recall that the electrostatic interaction energy of $N$ charged particles 
is represented in the form
  \begin{equation}
 \label{inter_energy} 
 E_{N}= E_N(x_1,\dots,x_N)= \sum^{N}_{i=1} \sum^{N}_{j <i} \frac{z_i z_j}{\|x_i- x_j\|},  
 \end{equation}
 and it can be computed by direct summation in $O(N^2)$ operations.
The following statement is the modification of Lemma 4.2  in \cite{BKK_RS:17}, see \cite{Khor_Prosp:21} for more details. 

 \begin{lemma}\label{lem:InterEnergy} \cite{Khor_Prosp:21}
 Let the effective support of the short-range components in the reference
 potential ${\bf P}_R$ for the Newton kernel does not exceed the minimal distance between particles, $\sigma>0$.
 Then the interaction energy $E_N$ of the $N$-particle system  can be
 calculated by using only the long range part  in the tensor ${\bf P}$ representing on the grid the total potential sum,
 \begin{equation}\label{eqn:EnergyFree_Tensor}
 E_N = \frac{1}{2} 
 \sum\limits_{{j}=1}^N z_{j}({\bf P}_l({x}_{j}) - z_j {\bf P}_{R_l}(0))=
 \frac{1}{2}\langle {\bf z},{\bf p}_l \rangle - \frac{{\bf P}_{R_l}(0)}{2} \sum\limits_{{j}=1}^N z_j^2,
\end{equation}
in $O(R_l N)$ operations, where $R_l$ is the canonical rank of the long-range component in ${\bf P}$, ${\bf P}_l$.
\end{lemma}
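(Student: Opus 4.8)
The plan is to start from the grid representation of the total Newton potential generated by all $N$ particles and split it into long- and short-range parts via \eqref{eqn:RS_repres}. Write ${\bf P} = {\bf P}_l + {\bf P}_s$ for the tensor obtained by summing, over all particles $j$, the shifted reference tensors $z_j\,{\bf P}_R(\cdot - x_j)$, where ${\bf P}_R$ is the canonical tensor approximating the single Newton kernel $1/\|x\|$ on the grid (cf.\ \eqref{eqn:NewtCan}, \eqref{eqn:sinc_general}); accordingly ${\bf P}_l$ collects the terms with $t_k\le 1$ and ${\bf P}_s$ the terms with $t_k>1$. The discrete interaction energy is, up to the diagonal self-interaction, $E_N = \frac12\sum_{j=1}^N z_j\big({\bf P}({x}_j) - z_j {\bf P}_R(0)\big)$: evaluating the total potential at the location $x_j$ of the $j$-th charge and subtracting the self-term $z_j{\bf P}_R(0)$ recovers exactly the pairwise sum \eqref{inter_energy}, since ${\bf P}_R(x_i - x_j)$ approximates $1/\|x_i-x_j\|$ and the factor $\frac12$ compensates the double counting of ordered pairs.

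The next step is the key reduction: replace ${\bf P}$ by ${\bf P}_l$ in the evaluation at the grid nodes $x_j$. Here I would invoke the hypothesis that the effective support of each short-range constituent ${\bf P}_s(\cdot - x_i)$ has diameter not exceeding the minimal interparticle distance $\sigma$. Consequently, for $i\ne j$ the short-range bump centered at $x_i$ is (up to the truncation threshold $\delta$) numerically zero at $x_j$, so ${\bf P}_s(x_j) = z_j{\bf P}_{R_s}(0)$ — only the self-bump survives. Adding this back to the self-correction term $-z_j{\bf P}_R(0) = -z_j({\bf P}_{R_l}(0)+{\bf P}_{R_s}(0))$ cancels the ${\bf P}_{R_s}(0)$ contribution, and one is left with $E_N = \frac12\sum_{j=1}^N z_j\big({\bf P}_l(x_j) - z_j{\bf P}_{R_l}(0)\big)$, which is \eqref{eqn:EnergyFree_Tensor}; the final rewriting as $\frac12\langle{\bf z},{\bf p}_l\rangle - \frac{{\bf P}_{R_l}(0)}{2}\sum_j z_j^2$ is just bookkeeping, with ${\bf p}_l = ({\bf P}_l(x_1),\dots,{\bf P}_l(x_N))$ and ${\bf z} = (z_1,\dots,z_N)$.

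The complexity claim then follows by noting that ${\bf P}_l$ is a canonical tensor of rank $R_l$, so each nodal value ${\bf P}_l(x_j)$ is a sum of $R_l$ products of three precomputed one-dimensional canonical-factor entries, costing $O(R_l)$ per particle and $O(R_l N)$ in total; the scalar product $\langle{\bf z},{\bf p}_l\rangle$ and the weighted sum of squares are $O(N)$. The main obstacle, and the only place where the argument is more than bookkeeping, is making the support-localization step \emph{quantitatively} clean: one must control the tail of the truncated short-range sinc sum ${\bf P}_{R_s}$ outside a ball of radius $\sigma$ and show it is dominated by the prescribed threshold $\delta$, so that the identity ${\bf P}_s(x_j) = z_j{\bf P}_{R_s}(0)$ holds up to an error absorbed into the overall approximation accuracy. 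This is exactly the content inherited from the exponential decay estimate \eqref{eqn:sinc_conv} together with the effective-support bound in Definition \ref{Def:RS-Can_format}, and I would cite Lemma 4.2 of \cite{BKK_RS:17} for the detailed error accounting rather than reproduce it.
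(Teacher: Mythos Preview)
Your proof plan is correct and follows exactly the standard argument: represent the interaction energy via point evaluations of the total potential with self-terms removed, use the support hypothesis to eliminate cross short-range contributions at particle centers so that only the self short-range term survives and cancels against the $-z_j{\bf P}_{R_s}(0)$ part of the subtraction, and read off the $O(R_l N)$ cost from the rank-$R_l$ canonical structure of ${\bf P}_l$. The paper itself does not give a proof of this lemma; it merely states it with a citation to \cite{Khor_Prosp:21} and notes that it is a modification of Lemma~4.2 in \cite{BKK_RS:17}, so your sketch (which in fact invokes that same external lemma for the quantitative support/tail control) is entirely aligned with the intended argument.
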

Here ${\bf z}\in \mathbb{R}^N$ is a vector composed of all charges of the multi-particle systems, and 
${\bf p}_l\in \mathbb{R}^N$ is 
the vector of samples of the collective electrostatic long-range potential ${\bf P}_l$ in the nodes 
corresponding to particle locations. Thus, the term $\frac{1}{2}\langle {\bf z},{\bf p}_l \rangle$ 
denotes the "non-calibrated" interaction energy associated with the long-range tensor component ${\bf P}_l$, 
while ${\bf P}_{R_l}$ denotes the long-range 
part in the  tensor representing the  \emph{single reference Newton kernel}, and ${\bf P}_{R_l}(0)$ is its value at the 
origin.

Lemma \ref{lem:InterEnergy} indicates that the interaction energy does not depend on the short-range 
part in the collective potential, and this is the key point for the construction of 
energy preserving regularized numerical schemes for solving the basic equations in bio-molecular 
modeling by using low-rank tensor decompositions.

\subsection{Multilinear operations in RS tensor formats} \label{ssec:operations_RS_form}

In what follows,  we address the important question on how 
\emph{the basic multilinear operations } can be implemented in the RS tensor format by 
using the RHOSVD rank compression. The point is that various tensor operations arise in the course of commonly used numerical 
schemes and iterative algorithms which usually include many sums and products of functions 
as well as the actions of differential/integral 
operators, always  making the tensor structure of input data much more complicated requiring the robust rank reduction schemes. 

The other important aspect is related to the use of large (fine resolution)
discretization grids which is limited by the restriction on the  size of the full input tensors, $O(n^d)$ (curse of dimensionality), 
representing the discrete functions and operators to be approximated in low rank tensor format. 
Remarkably,  that tensor decomposition for special class of functions, which 
allow the sinc-quadrature approximation, can be performed on practically  indefinitely large grids because the storage and numerical costs
of such numerical schemes scale linearly in the univariate grid size, $O(d n)$. Hence, having constructed such low rank approximations for certain
set of ``reproducing'' radial functions, makes it possible to construct the low rank RS representation at linear complexity, $O(d n)$,
for the wide class of functions and operators by using the rank truncated multilinear operations.
The examples of such ``reproducing'' radial functions are commonly used in our computational practice.

First, consider the Hadamard product of two tensors ${\bf P}_R$ and ${\bf Q}_{R_1}$ corresponding to the pointwise 
product of two generating multivariate functions centered at the same point.
The RS representation of the product tensor is based on the observation that the long-range part of 
the Hadamard product of two tensors in RS-format is basically determined by the product of their long-range parts.
\begin{lemma}\label{lem:LRP_prod_tensor}
 Suppose that the RS representation (\ref{eqn:RS_repres}) of tensors ${\bf P}_R$ and ${\bf Q}_{R_1}$
 is constructed based on the sinc-quadrature CP  approximation (\ref{eqn:sinc_general}). 
 Then the long-range part of the Hadamard product  of these RS-tensors,  
 \[
 {\bf Z}=({\bf P}_s + {\bf P}_l) \odot({\bf Q}_s + {\bf Q}_l),
\]
can be represented by the product of their long-range parts, ${\bf Z}_l={\bf P}_l \odot {\bf Q}_l$, 
with the subsequent rank reduction.
Moreover, we have $rank({\bf Z}_l) \leq R_l Q_l$.
\end{lemma}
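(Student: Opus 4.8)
\medskip\noindent\emph{Proof plan.} The plan is to expand the Hadamard product by bilinearity into four pieces, to argue that exactly one of them --- the product ${\bf P}_l\odot{\bf Q}_l$ of the two long-range parts --- carries the global support of ${\bf Z}$, while the other three are localized and can be absorbed into the short-range (cumulated CP) component, and finally to bound the CP rank of ${\bf Z}_l:={\bf P}_l\odot{\bf Q}_l$. Writing ${\bf P}_R={\bf P}_l+{\bf P}_s$ and ${\bf Q}_{R_1}={\bf Q}_l+{\bf Q}_s$ in the RS form (\ref{eqn:RS_repres}), bilinearity of $\odot$ gives
\[
 {\bf Z}={\bf P}_l\odot{\bf Q}_l+\bigl({\bf P}_l\odot{\bf Q}_s+{\bf P}_s\odot{\bf Q}_l+{\bf P}_s\odot{\bf Q}_s\bigr),
\]
so it remains to check (i) that the three bracketed terms are short-range, and (ii) the rank bound for ${\bf Z}_l$.

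For (i), I would first record that ${\bf P}_s$ and ${\bf Q}_s$ are genuinely localized: by construction they collect the sinc-quadrature terms with $t_k>1$, i.e. narrow Gaussians $\prod_{\ell}e^{-t_k^2 x_\ell^2}$ whose discretized samples decay super-exponentially away from the center, so that, for a fixed threshold $\delta>0$, the effective support of ${\bf P}_s$ and ${\bf Q}_s$ is contained in a box of index-diameter $2n_\delta$ with $n_\delta=O(1)$, exactly as in Definition \ref{Def:RS-Can_format} and \cite{BKK_RS:17}. The elementary fact I would then use is the pointwise estimate $|({\bf P}_s\odot{\bf X})_{\bf i}|\le|{\bf P}_{s,{\bf i}}|\,\|{\bf X}\|_\infty$, valid for any tensor ${\bf X}$; since the long-range factors ${\bf P}_l$ and ${\bf Q}_l$ are finite sums of uniformly bounded discretized Gaussians, each of ${\bf P}_l\odot{\bf Q}_s$, ${\bf P}_s\odot{\bf Q}_l$, ${\bf P}_s\odot{\bf Q}_s$ is, outside the $2n_\delta$-box of the short-range factor it contains, pointwise dominated by a constant multiple of $\delta$. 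Hence the three cross terms are localized on (a rescaling of) the same threshold and can be lumped into the short-range part of the RS representation of ${\bf Z}$, leaving ${\bf Z}_l={\bf P}_l\odot{\bf Q}_l$ as the long-range component.

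For (ii), I would combine distributivity of $\odot$ over CP sums with the fact that the Hadamard product of two rank-$1$ tensors is again rank-$1$,
\[
 \bigl({\bf p}^{(1)}_k\otimes\cdots\otimes{\bf p}^{(d)}_k\bigr)\odot\bigl({\bf q}^{(1)}_m\otimes\cdots\otimes{\bf q}^{(d)}_m\bigr)
 =\bigl({\bf p}^{(1)}_k\odot{\bf q}^{(1)}_m\bigr)\otimes\cdots\otimes\bigl({\bf p}^{(d)}_k\odot{\bf q}^{(d)}_m\bigr).
\]
Summing over the $R_l$ rank-one terms of ${\bf P}_l$ and the $Q_l$ rank-one terms of ${\bf Q}_l$ yields a CP representation of ${\bf Z}_l$ with at most $R_lQ_l$ terms, which is the asserted bound. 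The ``subsequent rank reduction'' is then the canonical-to-Tucker RHOSVD of Definition \ref{def:7.2} followed by the Tucker-to-canonical step of Lemma \ref{lem:7.4}, whose accuracy is controlled by Theorem \ref{thm:7.3} applied to the $R_lQ_l$-term canonical tensor ${\bf Z}_l$.

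The main obstacle is making the notion of \emph{effective support} in step (i) fully precise and showing that it is preserved under Hadamard multiplication: one has to track how the threshold $\delta$ propagates (it degrades by the sup-norms of the long-range factors) and, more delicately, to note that ${\bf Z}_l={\bf P}_l\odot{\bf Q}_l$ need not be as ``smooth'' or slowly decaying as its factors, since the Hadamard product of Gaussians of widths $t_k,s_m\le1$ corresponds to a Gaussian of width $\sqrt{t_k^2+s_m^2}$ which may slightly exceed the splitting value $1$. A fully careful treatment therefore re-applies the RS splitting (\ref{eqn:RS_repres}) to ${\bf Z}_l$ itself, returning the few borderline narrow Gaussians to the short-range bucket; this does not enlarge the rank bound $R_lQ_l$, but it is the point where the statement is genuinely an approximation rather than an exact identity.
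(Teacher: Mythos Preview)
Your proposal is correct and, in fact, more complete than the paper's own argument. The approaches differ in the localization step (your step (i)): the paper works \emph{directly} with the Gaussian structure of the sinc-quadrature skeleton vectors and uses the algebraic identity
\[
 e^{-t_k x_\ell^2}\, e^{-t_m x_\ell^2}=e^{-(t_k+t_m)x_\ell^2},
\]
noting that if either exponent parameter comes from the short-range bucket then the sum $t_k+t_m$ is even larger, so any Hadamard product involving a short-range factor is automatically at least as localized. You instead argue through the more general pointwise bound $|({\bf P}_s\odot{\bf X})_{\bf i}|\le |{\bf P}_{s,{\bf i}}|\,\|{\bf X}\|_\infty$, which does not use the Gaussian form at all. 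The paper's route is shorter given the hypothesis (\ref{eqn:sinc_general}), while yours would survive under weaker assumptions on the short-range part (any genuinely localized ${\bf P}_s$, not only Gaussian sums). Your explicit derivation of the rank bound $R_lQ_l$ via rank-$1\odot$rank-$1=$ rank-$1$ is standard but is not written out in the paper, and your final remark---that some terms of ${\bf P}_l\odot{\bf Q}_l$ may correspond to a combined Gaussian width $\sqrt{t_k^2+s_m^2}>1$ and should, strictly speaking, be returned to the short-range bucket---is a legitimate refinement that the paper's proof does not address.
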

\begin{proof}
 We consider the case of collocation tensors and suppose that each skeleton vector in CP tensors ${\bf P}_R$ and ${\bf Q}_{R_1}$
 is given by the restriction of certain Gaussians   to the set of grid points. 
  Chose the arbitrary short-range components in ${\bf P}_R$ and some component in ${\bf Q}_{R_1}$, generated by Gaussians 
  $e^{-t_k x_\ell^2}$ and $e^{-t_m x_\ell^2}$, respectively.
 Then the effective support of the product of these two  terms becomes smaller than that for 
 each of the factors in view of the
 identity
 $e^{-t_k x_\ell^2}e^{-t_m x_\ell^2}= e^{-(t_k+t_m) x_\ell^2}$ considered for arbitrary $t_k, t_m >0$.
 This means that each term that includes the short-range multiple remains to be in the short range.
 Then the long range part in ${\bf Z}$ takes a form ${\bf Z}_l={\bf P}_l \odot {\bf Q}_l$ 
 with the subsequent rank reduction.
\end{proof}

The sums of several tensors in RS format can be easily split into short- and long-range parts 
by grouping the respective components
in the summands. The other important operation is the operator-function product in RS tensor format (see the example in 
\cite{BKhor_Dirac:18} related to the action of Laplacian with the singular Newton kernel resulting in the RS 
decomposition of the Dirac delta). This topic will be considered in detail elsewhere.

\subsection{Representing the Slater potential in RS tensor format} \label{ssec:Slater_RS_form}

In what follows, we consider the RS-canonical tensor format for the rank-structured representation of the Slater function 
\[
 G(x)=e^{-\lambda \|x\|}, \quad \lambda\in \mathbb{R}_+,
\]
 which  has the principal significance in electronic structure calculations (say, based on the Hartree-Fock equation) since
it represents the cusp behavior of electron density in the local vicinity of nuclei.
This function (or its approximation) is considered as the best candidate to
be used as the localized basis function for atomic orbitals basis sets.
Another direction is related to the construction of the accurate low-rank global interpolant for big scattered data
to be considered in the next section.
In this way, we calculate the data adaptive basis set living on the fine Cartesian grid in the region of target data.
The main challenge, however, is due to the presence of point singularities which are hard to approximate 
in the problem independent polynomial or trigonometric basis sets.


The construction of low-rank RS approximation to the Slater function is based on 
the generalized Laplace transform representation for the Slater function written in the 
form $G(\rho)={e^{-2 \sqrt{\alpha \rho}}}$, $\rho(x)=  x_{1}^2+...+ x_{d}^2$,
reads
\begin{equation*} \label{Slater Gauss tr}
G(\rho)={e^{-2 \sqrt{\alpha \rho}}}=
\frac{\sqrt{\alpha}}{\sqrt{\pi}}\int_{\mathbb{R}_+} \tau^{-3/2}
\exp(- \alpha/\tau - \rho \tau) d \tau,
\end{equation*}
which corresponds to the choice 
$\widehat{G}(\tau) =\frac{\sqrt{\alpha}}{\sqrt{\pi}} \tau^{-3/2} e^{-\alpha/\tau}$ in the  canonical form of the Laplace 
transform representation for $G(\rho)$,
\begin{equation} \label{Slater_Laplace_tr}
 G(\rho)=\int_{\mathbb{R}_{+}}\widehat{G}(\tau)e^{-\rho\tau}d\tau.
\end{equation}
Denote by ${\bf G}_R$ the rank-$R$ canonical approximation to the function $G(\rho)$ discretized on 
the $n\times n\times n$ Cartesian grid.
\begin{lemma}\label{lem:sincSlater} (\cite{Khor1:06})
For given threshold $\varepsilon>0$  let $\rho\in [1,A]$. Then the $(2M+1)$-term 
sinc-quadrature approximation of the integral in (\ref{Slater_Laplace_tr}) with
\begin{equation*} \label{Slater_sinc_error}
M=O(|\log \varepsilon|(|\log \varepsilon|+ \log A )),
\end{equation*}
ensures the max-error of the order of $O(\varepsilon)$ for 
the corresponding rank-$(2M+1)$ CP approximation ${\bf G}_R$ to the tensor ${\bf G}$.
\end{lemma}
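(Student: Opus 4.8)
The plan is to derive Lemma~\ref{lem:sincSlater} from the classical sinc-quadrature error estimate, exactly as in the Newton- and Yukawa-kernel cases leading to (\ref{eqn:sinc_conv}); the only new ingredients are the weight $\widehat{G}(\tau)=\frac{\sqrt{\alpha}}{\sqrt{\pi}}\tau^{-3/2}e^{-\alpha/\tau}$ and the requirement of uniformity over $\rho\in[1,A]$. First I would turn the half-line integral (\ref{Slater_Laplace_tr}) into an integral over all of $\mathbb{R}$ by the substitution $\tau=e^{u}$, obtaining $G(\rho)=\int_{\mathbb{R}}f_\rho(u)\,du$ with $f_\rho(u)=\frac{\sqrt{\alpha}}{\sqrt{\pi}}\,e^{-u/2}\exp(-\alpha e^{-u}-\rho e^{u})$. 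Two facts about $f_\rho$ must then be checked: (i) for every fixed $d<\pi/2$ the map $f_\rho$ extends to a bounded holomorphic function on the strip $D_d=\{u\in\mathbb{C}:|\Im u|<d\}$, with bounds uniform in $\rho\in[1,A]$ and in the fixed parameter $\alpha$ --- which reduces to $\Re(\alpha e^{-u}+\rho e^{u})=(\alpha e^{-\Re u}+\rho e^{\Re u})\cos(\Im u)>0$ on $D_d$; and (ii) $f_\rho$ decays double-exponentially both as $u\to-\infty$ (through the factor $e^{-\alpha e^{-u}}$) and as $u\to+\infty$ (through the factor $e^{-\rho e^{u}}$, where $\rho\ge1$ is used). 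A saddle-point inspection also shows that the mass of $f_\rho$ is concentrated near $u^{\ast}=\tfrac12\log(\alpha/\rho)$; as $\rho$ sweeps $[1,A]$ this point drifts down to about $-\tfrac12\log A$, and this is precisely the origin of the $\log A$ term in the bound on $M$. I would point out here that the hypothesis $\rho\ge1$ is essential --- for $\rho\to0$ the right tail of $f_\rho$ decays only polynomially and the scheme degrades --- and that this is exactly the regime delegated to the short-range part in the RS splitting.

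Given (i)--(ii), the next step is to invoke the standard trapezoidal/sinc error bound for the Hardy class $H^{1}(D_d)$ (Stenger \cite{Stenger:93}, see also \cite{HaKhtens:04I}): the equispaced rule with step $h$ has aliasing error at most $C\,e^{-2\pi d/h}$, uniformly in $\rho\in[1,A]$, while keeping only the $2M+1$ nodes $|k|\le M$ costs an extra $h\sum_{|k|>M}|f_\rho(kh)|$, which by (ii) is at most $C\,e^{-b\,e^{Mh}}$ once $Mh$ exceeds the saddle location, i.e.\ once $Mh\gtrsim\log A$. Then I would balance the two contributions: taking $h$ of order $1/|\log\varepsilon|$ makes the aliasing term $O(\varepsilon)$, and then $M$ of order $(|\log\varepsilon|+\log A)/h$ makes the truncation term $O(\varepsilon)$ as well, which gives $M=O(|\log\varepsilon|(|\log\varepsilon|+\log A))$; different admissible substitutions affect this only by $\log\log$ versus $\log$ factors, so the stated form is recovered. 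The result is a short positive exponential sum $G(\rho)\approx\sum_{k=-M}^{M}p_k\,e^{-\rho\tau_k}$ with $\tau_k=e^{kh}$ and $p_k=\frac{\sqrt{\alpha}}{\sqrt{\pi}}\,h\,e^{kh/2}e^{-\alpha e^{-kh}}>0$, accurate to $O(\varepsilon)$ uniformly for $\rho\in[1,A]$.

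Finally I would transfer the scalar estimate to the tensor. Writing $\rho=\rho(x)=x_1^2+x_2^2+x_3^2$, each summand factorizes, $e^{-\rho\tau_k}=\prod_{\ell=1}^{3}e^{-x_\ell^{2}\tau_k}$, so sampling the exponential sum on the Cartesian grid produces a rank-$(2M+1)$ canonical tensor $\mathbf{G}_R$ of the form (\ref{eqn:sinc_general}), whose canonical vectors are the sampled Gaussians $e^{-x_\ell^{2}\tau_k}$. Since the per-entry error equals $|G(\rho_{\mathbf{i}})-\sum_k p_k e^{-\rho_{\mathbf{i}}\tau_k}|=O(\varepsilon)$ at every node $x_{\mathbf{i}}$ with $\rho_{\mathbf{i}}=\|x_{\mathbf{i}}\|^{2}\in[1,A]$, the claimed max-norm bound $\|\mathbf{G}-\mathbf{G}_R\|=O(\varepsilon)$ follows immediately. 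The main obstacle is the uniformity in step (i)--(ii): I must verify that neither the strip width $d$ nor the double-exponential decay rates of $f_\rho$ collapse as $\rho$ runs over $[1,A]$, and must track the $\rho$-dependent peak location $u^{\ast}$ precisely enough to isolate the additive $\log A$; once that is done, the choice of $h$ and $M$ and the lift to the tensor are routine.
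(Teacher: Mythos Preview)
The paper does not actually prove Lemma~\ref{lem:sincSlater}: it is stated with a citation to \cite{Khor1:06} and no argument is supplied in the text, so there is no in-paper proof to compare your proposal against. Your sketch is precisely the standard sinc-quadrature analysis one would expect from that reference and from \cite{Stenger:93,HaKhtens:04I}: the substitution $\tau=e^{u}$, verification of analyticity on a strip and of double-exponential tail decay (using $\rho\ge 1$ for the right tail and the factor $e^{-\alpha/\tau}$ for the left), the standard aliasing/truncation error split, the balancing $h\asymp 1/|\log\varepsilon|$, $Mh\gtrsim |\log\varepsilon|+\log A$, and finally the separability $e^{-\rho\tau_k}=\prod_\ell e^{-x_\ell^2\tau_k}$ to obtain the rank-$(2M+1)$ CP tensor. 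This is correct and matches the methodology the paper already uses for the Newton and Yukawa kernels in (\ref{eqn:sinc_Newt})--(\ref{eqn:sinc_conv}); your remark that the drift of the saddle $u^\ast\approx\tfrac12\log(\alpha/\rho)$ over $\rho\in[1,A]$ produces the additive $\log A$ is the right way to isolate that term.
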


\begin{figure}[htb]
\centering
\includegraphics[width=7.0cm]{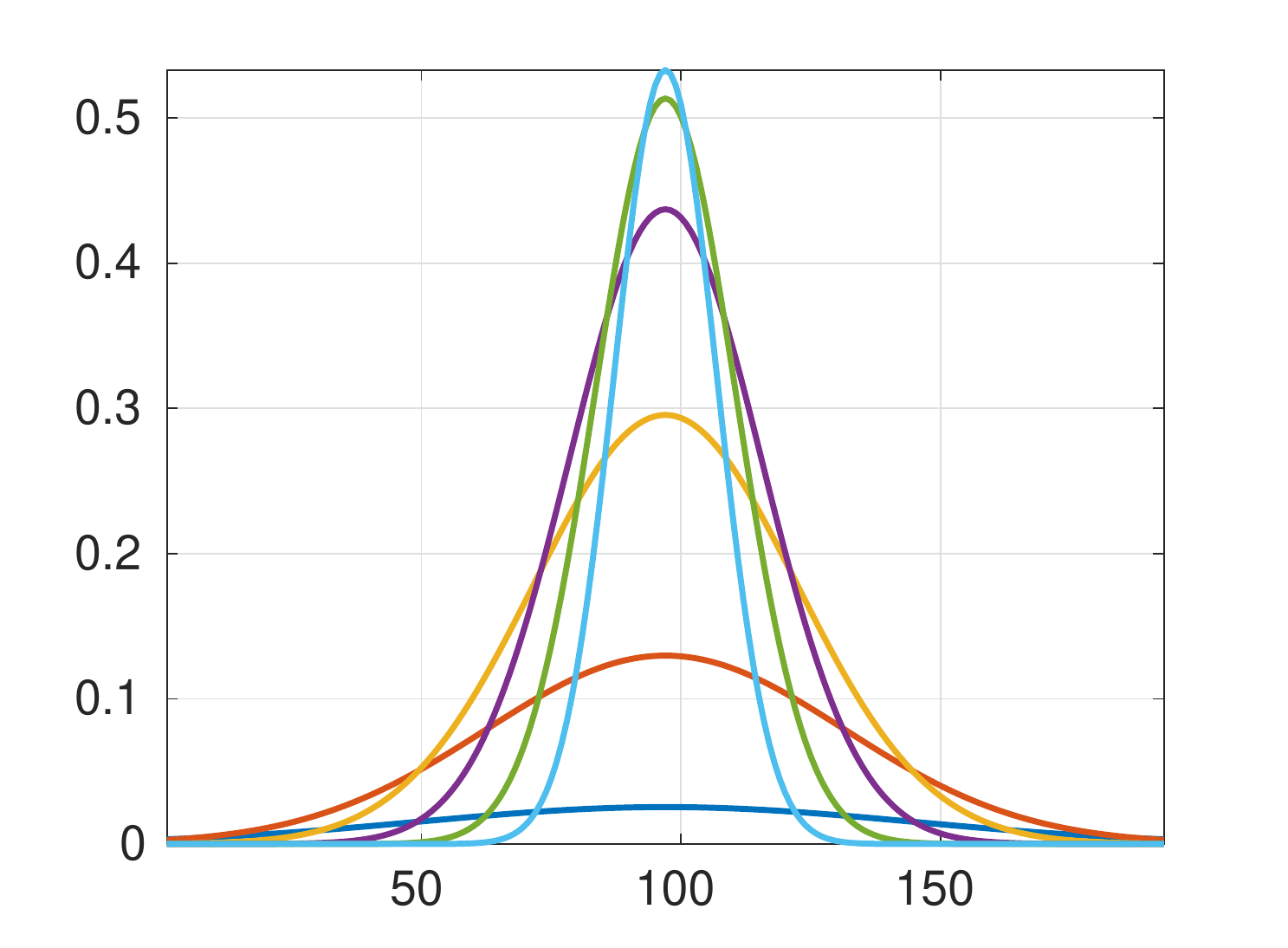} \quad
\includegraphics[width=7.0cm]{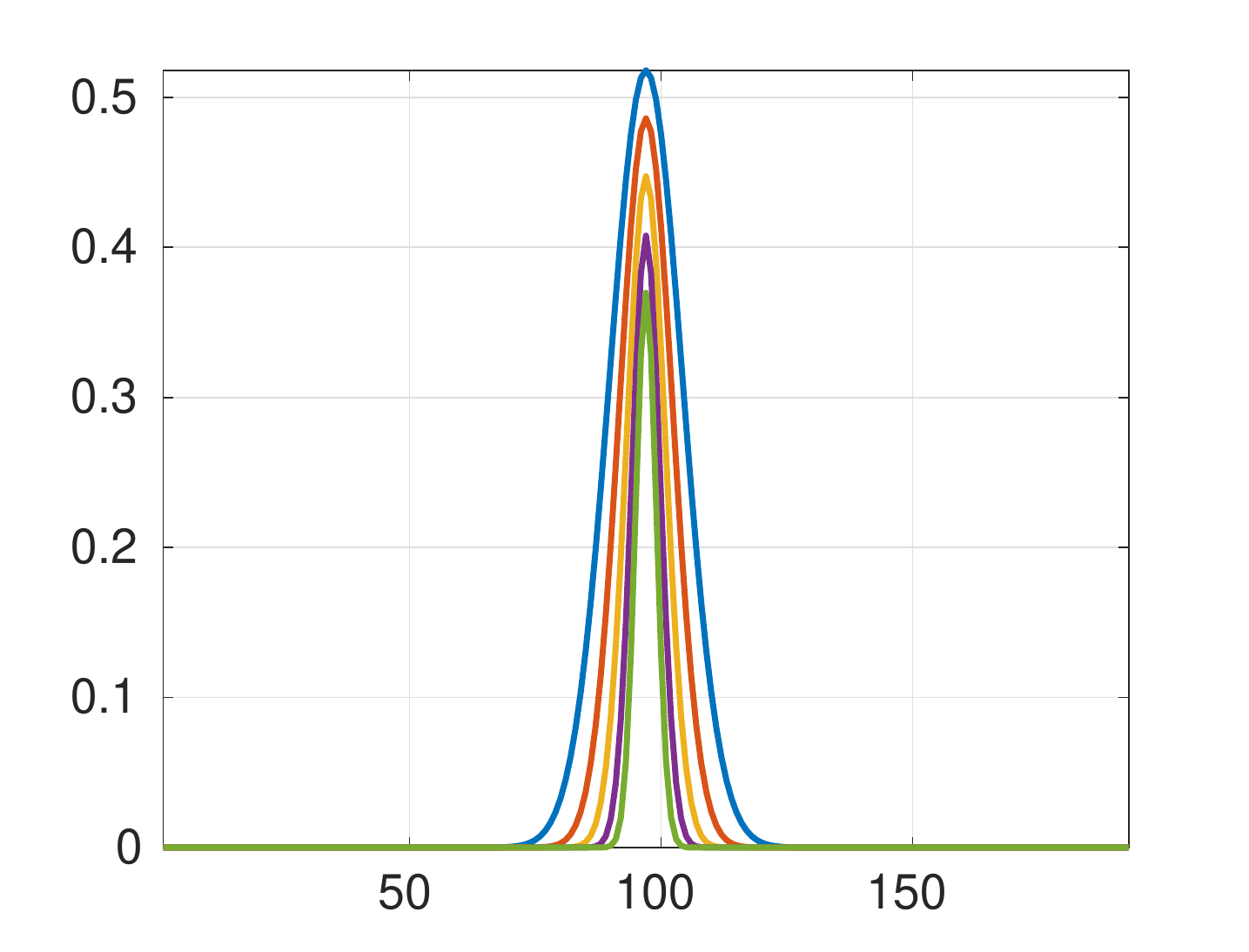}
 \caption{\small Long-range (left) and short-range (right, logarithmic scale) canonical vectors 
 for the Slater function with the grid size $n=1024$, $R=24, R_l=6$, $\lambda=1$.}
\label{fig:1024_rl6_Slat}
\end{figure}

Figure \ref{fig:1024_rl6_Slat} illustrates the RS splitting for the tensor 
${\bf G}_R= {\bf G}_{R_l} + {\bf G}_{R_s}$ 
representing the Slater potential $G(x)= e^{- \lambda \|x\|}$, $\lambda =1$, discretized 
on the $n\times n\times n$ grid with $n=1024$.  The rank parameters are chosen by $R=24, R_l=6$ and $R_s=18$.
Notice that for this radial function the long-range part (Figure \ref{fig:1024_rl6_Slat}, left) 
includes much less canonical 
vectors comparing with the case of Newton kernel.
This anticipates the smaller total canonical rank for the long-range part 
in the large sum of Slater-like potentials arising, for example, in the representation 
of molecular orbitals and the electron density 
in electronic structure calculations. 
For instance, the wave function for the Hydrogen atom is given by the Slater function $e^{-\mu \|x\|}$. 
In the following section, we consider the application of RS tensor format
to interpolation of scattered data in $\mathbb{R}^d$.

 \subsection{Application of RHOSVD to scattered data modeling} \label{sec:Apll2Data}

 In scattered data modeling the problem is in a low parametric approximation of
multi-variate functions $f:\mathbb{R}^d \to \mathbb{R}$ 
by sampling at a finite set  ${\cal X}=\{x_1,\ldots,x_N\}\subset \mathbb{R}^d$
of piecewise distinct points. 
Here the function $f$ might be the surface of a solid body, the solution of a PDE, 
many-body potential field, multi-parametric characteristics of physical systems, 
or some other multi-dimensional data, etc. 

 Traditional ways of recovering $f$ from a sampling vector
$f_{|{\cal X}}=(f(x_1),\ldots,f(x_N))$ is the 
constructing  a functional interpolant $P_N:\mathbb{R}^d \to \mathbb{R}$ such that 
$P_{N|{\cal X}}=f_{|{\cal X}}=:{\bf f}\in \mathbb{R}^N$, i.e.,
\begin{equation}\label{eqn:Interp_col}
 P_N(x_j)=f(x_j), \quad \forall\; 1\leq j \leq N. 
\end{equation}
 
Using {radial basis (RB) functions} one can find interpolants $P_N$ in the form
\begin{equation}\label{eqn:RBF_interp}
 P_N(x) = \sum_{j=1}^N c_j p(\|x-x_j\|) + Q(x), 
 \quad Q \; \mbox{is some smooth function, say, polynomial},
\end{equation}
where
$p=p(r):[0,\infty)\to \mathbb{R}$ is a fixed RB function, and $r=\|\cdot \|$ is
the Euclidean norm on $\mathbb{R}^d$. In further discussion we set $Q(x)=0$.
For example, the following RB functions are commonly used
$$
 p=r^\nu, \quad (1+r^2)^\nu,\; (\nu\in \mathbb{R}),
 \quad \exp(-r^2), \quad \exp(-\lambda r), \quad r^2 \log (r).
$$
The other examples of RB functions are defined by Green's kernels or by the class
of Mat{\'e}rn functions \cite{LiKeKKMa:19}.

 We discuss the following computational tasks (A) and (B). 
\begin{itemize}
 \item[(A)] Fixed coefficient vector ${\bf c}=(c_1,\ldots,c_N)^T\in \mathbb{R}^N$,
efficiently representing the interpolant $P_N(x)$
on the fine tensor grid in $\mathbb{R}^d$ providing \\ 
(a) $O(1)$-fast point evaluation of $P_N$ in the computational volume $\Omega$, \\ 
(b) computation of various integral-differential operations on that interpolant 
(say, {gradients, scalar products, convolution integrals, etc.}).
\item[(B)] Finding the coefficient vector ${\bf c}$  that solves the 
 interpolation problem (\ref{eqn:Interp_col}) in the case of large number $N$.
\end{itemize}

 Problem {(A)} exactly fits the RS tensor framework so that the RS tensor approximation 
 solves the problem with low computational costs provided that the sum of long-range parts of 
 the interpolating functions can be easily approximated in the low rank CP tensor format. 
 We consider the case of interpolation by Slater functions $\exp(-\lambda r)$  in the more detail.

 Problem {(B):} Suppose that we use some favorable preconditioned iteration for 
solving coefficient vector ${\bf c}=(c_1,\ldots,c_N)^T$, 
\begin{equation}\label{eqn:RBF_Syst}
 A_{p,{\cal X}} {\bf c}= {\bf f}, \quad \mbox{with}\quad 
 A_{p,{\cal X}}=A_{p,{\cal X}}^T =[p(\|x_i-x_j\|)]_{1\leq i,j\leq N}\in \mathbb{R}^{N\times N},
\end{equation}
with the  distance dependent symmetric system matrix $A_{p,{\cal X}}$. 
We assume 
 ${\cal X}=\Omega_h$ be the $n^{\otimes d}$-set of grid-points located on
 tensor grid, i.e., $N=n^d$.
Introduce the $d$-tuple multi-index $i \mapsto {\bf i}=(i_1,\ldots,i_d)$, and 
$j \mapsto {\bf j}=(j_1,\ldots,j_d)$ and reshape  $ A_{p,{\cal X}}$ into the tensor form
$$ 
A_{p,{\cal X}}\mapsto {\bf A}=[a(i_1,j_1,\ldots,i_d,j_d)]\in 
  {\bigotimes}_{\ell=1}^d \mathbb{R}^{n\times n}, 
$$
which can be decomposed by using  the RS based splitting
\[
 {\bf A}={\bf A}_{R_s} + {\bf A}_{R_l},
\]
generated by the RS representation of the weighted potential sum in (\ref{eqn:RBF_interp}).
Here ${\bf A}_{R_s}$ is (almost) diagonal matrix, while
 ${\bf A}_{R_l}= {\sum}_{k=1}^{R_l} A^{(1)}_k \otimes \cdots \otimes A^{(d)}_k$
 is the low Kronecker rank matrix.
This implies a bound on the storage, $O(N + d R_l n)$, 
 and ensures  a fast matrix-vector multiplication.
 Introducing the additional rank-structured representation in ${\bf c}$, the solution of 
 (\ref{eqn:RBF_Syst}) can be further simplified. 
 
 The above approach can be applied to the data sparse representation for the 
 class of large covariance matrices in the spacial statistics, see for example  \cite{Matern:86,LiKeKKMa:19}.

In application of tensor methods to data modeling  (see \S\ref{sec:Apll2Data}) 
we consider the interpolation of 3D scattered data by a large sum of Slater functions
\begin{equation}\label{eqn:Slater_interp}
 G_N(x) = \sum_{j=1}^N c_j e^{-\lambda \|x-x_j\|}, \quad \lambda >0.
 \end{equation}
Given the coefficients $c_j$, we address the question how to efficiently represent the interpolant $G_N(x)$
on fine Cartesian grid in $\mathbb{R}^3$ by using the low-rank (i.e., low-parametric) CP tensor format,
such that each value on the grid can be calculated in $O(1)$ operations. The main problem is that 
the generating Slater function $e^{-\lambda \|x\|}$ has the cusp at the origin so that the 
considered interpolant has very low regularity. As result, the tensor rank
of the function $G_N(x)$ in (\ref{eqn:Slater_interp}) discretized on a large $n\times n \times n$ grid 
increases proportionally 
to the number $N$ of sampling points $x_j$, which in general may be very large. 
Hence the straightforward  tensor approximation of $G_N(x)$ does not work in this case.

 The generating Slater radial function can be proven to have the low-rank RS canonical tensor decomposition 
 by using the sinc-approximation method, see \S\ref{ssec:Theory_RS_sinc}.

To complete this section, we present the numerical example demonstrating the application of RS tensor 
representation to scattered data modeling in $\mathbb{R}^{3}$. 
We denote by ${\bf G}_R\in \mathbb{R}^{n\otimes 3}$ the rank-$R$ 
CP tensor approximation of the reference Slater potential 
$e^{-\lambda \|x\|}$ discretized on  $n\times n \times n$ grid $\Omega_n$, and
introduce its RS splitting ${\bf G}_R= {\bf G}_{R_l} + {\bf G}_{R_s}$,
with $R_l+R_s =R$. Here $R_l\approx R/2$ is the rank parameter of the 
long-range part in ${\bf G}_R$. Assume that all measurement points 
$x_j$ in (\ref{eqn:Slater_interp}) are located on the discretization 
grid $\Omega_n$, then the tensor representation of the long-range part 
of the total interpolant
$P_N$ can be obtained as the sum of the properly replicated reference 
potential ${\bf G}_{l}$, via the shift-and-windowing transform ${\cal W}_j$, $j=1,\ldots,N$,
\begin{equation}\label{eqn:Slater_interp_multi}
 {\bf G}_{N,l}= \sum_{j=1}^N c_j {\bf G}_{l,j}, \quad {\bf G}_{l,j} = {\cal W}_j {\bf G}_{l},
\end{equation}
that includes about $N \, R_l$ terms. For large number of measurement points, $N$, the rank reduction is ubiquitous.
 
 It can be proven (by slight modification of arguments in \cite{BKK_RS:17}) that both the CP and Tucker ranks
 of the $N$-term sum in (\ref{eqn:Slater_interp_multi}) depend only logarithmically (but not linearly) on $N$.
  
 \begin{proposition}\label{prop:Slater_LR_rank} (Uniform rank bounds for the long-range part in the Slater interpolant).
Let the long-range part ${\bf G}_{N,l}$ in the total Slater interpolant in (\ref{eqn:Slater_interp_multi})
be composed of those terms in (\ref{eqn:sinc_Newt}) which satisfy the relation $t_k \leq 1$,
where $M=O(\log^2\varepsilon)$.
Then the total $\varepsilon$-rank ${\bf r}_0$ of the Tucker approximation to the canonical tensor sum ${\bf G}_{N,l}$
is bounded by
\begin{equation}\label{eqn:Rank_LongR}
 |{\bf r}_0|:=rank_{Tuck}({\bf G}_{N,l})=C\, b \,\log^{3/2} (\log (N/\varepsilon)),
\end{equation}
where the constant $C$ does not depend on the number of particles $N$, 
as well as on the size of the computational box, $[-b,b]^3$.
 \end{proposition}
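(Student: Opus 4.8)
\medskip
\noindent\emph{Sketch of the proof.}
The plan is to realise ${\bf G}_{N,l}$ as the sampling on $\Omega_n$ of an \emph{analytic} function whose sup-norm over a fixed complex polystrip is at most polynomial in $N$, and then to invoke the low-rank approximability of such functions, tracking the constants so that $N$ survives only inside a logarithm. Inserting the sinc-quadrature expansion (\ref{eqn:sinc_Newt}) of the reference Slater tensor into (\ref{eqn:Slater_interp_multi}) and retaining only the long-range nodes $t_k\le 1$, one has ${\bf G}_{N,l}=g_{|\Omega_n}$ with
\begin{equation*}
 g(x)\ =\ \sum_{k\in{\cal K}_l} p_k\sum_{j=1}^N c_j\,\prod_{\ell=1}^{3} e^{-t_k^2\,(x_\ell-(x_j)_\ell)^2},\qquad x\in[-b,b]^3,
\end{equation*}
since each windowing operator ${\cal W}_j$ merely translates the Gaussians to the centre $x_j$. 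The first and decisive observation is that every factor $e^{-t_k^2(z-a)^2}$ with $t_k\le 1$ is \emph{entire} in $z$ and, on the strip $|\operatorname{Im} z|\le\eta$, is bounded by $e^{t_k^2\eta^2}\le e^{\eta^2}$; hence, for a \emph{fixed} half-width $\eta=O(1)$ that is independent of $b$, of $N$, and of the positions of the centres, $g$ extends holomorphically to the polystrip $\{z\in\mathbb{C}^{3}:\ |\operatorname{Im} z_\ell|\le\eta,\ \ell=1,2,3\}\supset[-b,b]^3$, with
\begin{equation*}
 \|g\|_{L^\infty}\ \le\ \Big(\sum_{k\in{\cal K}_l}|p_k|\Big)\Big(\sum_{j=1}^{N}|c_j|\Big)e^{3\eta^2}\ \le\ C\,|{\cal K}_l|\,N .
\end{equation*}
Thus the number of centres enters the estimate only through the magnitude of $g$.

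Next I would invoke the standard bound on the $\varepsilon$-Tucker rank of a function analytic and uniformly bounded on such a polystrip --- the estimate underlying the HOSVD/Tucker error analysis for analytic function-related tensors, cf.\ \cite{Khor1:06} and the sinc-quadrature error analysis of Lemma~\ref{lem:sincSlater}: the mode-$\ell$ $\varepsilon$-rank of $g_{|\Omega_n}$ is controlled, uniformly in $\ell$ and in the grid size $n$, by the corresponding univariate quantity, which for the entire, Gaussian-type integrand is $O(b)$ times a factor growing only poly-logarithmically in $\log(\|g\|_{L^\infty}/\varepsilon)$ (it is the super-exponential decay available for entire integrands that replaces the usual single logarithm by a double one). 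Substituting $\|g\|_{L^\infty}\le C|{\cal K}_l|N$ together with the bound $|{\cal K}_l|=O(\log^2\varepsilon)$ coming from Lemma~\ref{lem:sincSlater}, and carrying out the three-mode bookkeeping exactly as in \cite{BKK_RS:17}, yields $|{\bf r}_0|\le C\,b\,\log^{3/2}(\log(N/\varepsilon))$ with $C$ independent of $N$ and of $b$. An alternative route, slightly more transparent, is to split $g=\sum_{k\in{\cal K}_l}g_k$ and use subadditivity of the Tucker rank over the $|{\cal K}_l|$ long-range terms, estimating each $g_k$ separately; this delivers the same bound.

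The hard part is the step concealed in ``uniformly in the centres $x_j$''. A priori the $N$ translated Gaussian profiles span, in each unfolding, a subspace of dimension as large as $N|{\cal K}_l|$, and the whole content of the proposition is that --- up to the tolerance $\varepsilon$ --- they all lie in one and the same space, namely the space of grid samples of functions analytic and uniformly bounded on the fixed strip, whose $\varepsilon$-dimension does not see $N$ or the $x_j$. Making this quantitative requires (i) that the lower bound $\eta\ge 1/\max_k t_k=O(1)$ on the strip half-width be genuinely independent of $b$ and $N$ --- which is exactly why one isolates the long-range nodes $t_k\le 1$ --- and (ii) that arbitrarily small $t_k$ cause no harm, which is clear since a very wide Gaussian is nearly constant on $[-b,b]^3$ and therefore of essentially rank one. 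The ``slight modification of the arguments of \cite{BKK_RS:17}'' promised in the statement is precisely the verification of these two points for the long-range part of the Slater tensor, together with the sharper accounting of $|{\cal K}_l|$ and of the approximation rate that upgrades the logarithmic $N$-dependence of the Newton case to the double-logarithmic one stated here.
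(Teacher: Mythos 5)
Your reduction of the problem to ``$g$ is analytic and uniformly bounded on a fixed polystrip, now invoke the standard Tucker-rank bound for such functions'' is where the argument breaks down, and it is exactly the quantitative core of the proposition. For a function that is merely analytic and bounded by $M$ on a strip of fixed half-width $\eta=O(1)$ around $[-b,b]$, the best one can cite is a Chebyshev/Bernstein-type estimate with error decay like $(1+\eta/b)^{-r}$, i.e.\ a univariate $\varepsilon$-rank of order $(b/\eta)\log(M/\varepsilon)$ --- a \emph{single} logarithm in $N/\varepsilon$ --- and this is sharp on that class (a function with a singularity just beyond the strip saturates it). Hence the decisive upgrade to the doubly logarithmic dependence in (\ref{eqn:Rank_LongR}) cannot be ``invoked''; it is precisely what has to be proved, and it requires using the concrete structure of the long-range Gaussians (entire of order two, equivalently an essentially band-limited Fourier image), which your sketch only mentions parenthetically and never carries out. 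Your sup-norm control is even established only on one fixed strip, which throws away the entire-function information you would need. Moreover, the proposed ``alternative route'' via subadditivity of the Tucker rank over the $|{\cal K}_l|=O(\log^2\varepsilon)$ long-range quadrature terms does \emph{not} deliver the same bound: it multiplies the per-term rank by $|{\cal K}_l|$, a factor that does not appear in (\ref{eqn:Rank_LongR}).

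The paper's own (sketched) proof proceeds differently and avoids both difficulties at once: since all retained Gaussians have $t_k\le 1$, their Fourier images are essentially band-limited, with a frequency cutoff that grows only logarithmically when the tolerance $\varepsilon$ is distributed over the $N$-term sum; expanding every shifted Gaussian in one \emph{fixed} truncated trigonometric basis on $[-b,b]^3$ then yields an orthogonal Tucker representation whose mode dimensions equal the number of retained frequencies. In that argument the independence of the centers $x_j$ is automatic (a shift only multiplies Fourier coefficients by phases, so all replicas lie in the same truncated basis), the $|{\cal K}_l|$ quadrature terms share one basis rather than adding ranks, and the count of basis functions produces the $b$-proportional, weakly $N$-dependent bound. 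To repair your analytic-continuation route you would have to replace the generic strip estimate by an explicit super-exponentially convergent expansion of the wide Gaussians with tail bounds uniform in the shifts --- which in effect reproduces the Fourier computation underlying the paper's and \cite{BKK_RS:17}'s argument.
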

\begin{proof}
 (Sketch) The main argument of the proof is based on the fact that the grid function ${\bf G}_{N,l}$ has the band-limited 
 Fourier image, such that the frequency interval depends weakly (logarithmically) on $N$. Then we represent all Gaussians 
 in the truncated Fourier basis and make the summation in the fixed set of orthogonal trigonometric basis functions, 
 which defines the orthogonal Tucker 
 representation with controllable rank parameter.
\end{proof}
The numerical illustrations below demonstrate the CP rank by RHOSVD decomposition of the long-range part ${\bf G}_{N,l}$ in 
the multi-point tensor interpolant via Slater functions. 

 \begin{table}[htb]
\begin{center}\footnotesize 
\begin{tabular}
[c]{|r|r|r|r|r||}%
 \hline
 $L_1\times L_2\times L_3$ & $N$  & Tucker ranks  & $R_{ini}$ & $R_{comp}$ \\
  \hline
 $4\times 4\times 4$  & 64 &  $13\times 13 \times 13$ & 192  & 56      \\ 
 \hline
  $6\times 6\times 6$  & 216 & $ 15\times 15 \times 15$ & 649  & 95   \\ 
 \hline
 $8\times 8\times 8$  & 512 & $19\times 19 \times 19$ & 1536  & 131  \\ 
 \hline
 $16\times 16\times 8$  & 2048 & $32\times 32 \times 19$ & 6141  & 253   \\ 
 \hline
 $16\times 16\times 16$  & 4096 & $32\times 32 \times 32$ & 12288  & 380   \\ 
 \hline
 \end{tabular}
\caption{\small Reduced ranks for the case of random amplitudes.
$\varepsilon_{Tuck} = 10^{-3}$, $\varepsilon_{T2C} = 10^{-5}$, $R_L =3$, $R=5$.}
\label{Tab:random_ranks}
\end{center}
\end{table} 
  Now, we generate a tensor composed of a sum of Slater functions, 
 discretized by collocation over $n^{\otimes 3}$ representation grid with $n=384$, 
 and placed in the nodes of a sampling $L_1\times L_2 \times L_3$ lattice 
 with randomly chosen weights $c_j$ in the interval $c_j \in [-5,5]$ for every node. 
 Every  single Slater function is generated as a canonical tensor by using sinc-quadratures for the approximation of 
 the related Laplace transform.
  Table \ref{Tab:random_ranks} shows ranks of the long-range part of this tensor composed of 
  Slater potentials located in the nodes of the lattices of increasing size. 
  $N$ indicates the number of nodes,   
  while $R_{ini}$ and $R_{comp}$ are the initial and compressed canonical
  ranks of the resulting long-range part tensor, respectively. Tucker ranks correspond to the 
  ranks in the canonical-to-Tucker decomposition step.
  Threshold values for the Slater potential generator is 
  $\varepsilon_N=10^{-5}$, while the tolerance thresholds for the 
  rank reduction procedure are given by 
  $\varepsilon_{Tuck}=10^{-3}$ and $\varepsilon_{T2C}=10^{-5}$.
  We observe that the ranks of the long-range part of the potential increase only slightly 
  in the size of the 3D sampling lattice, $N$.

Figure \ref{fig:Slat_multi_randon_12x12x4} demonstrates the
full-, short- and long-range components of the  multi-Slater tensor constructed by   
   the weighted sum of Slater functions with {\it randomly} chosen weights $c_j$ in the 
  interval $c_j \in [-5,5]$. The positions of the generating nodes are located on the $12\times 12 \times 4$ 3D lattice.
  The parameters of the tensor interpolant  are set up as follows: $\lambda=0.5$, 
  the representation grid is of size $n^{\otimes 3}$ with $n=384$, $R=8, R_l=3$ and 
  the number of samples $N=576$. (Figures zoom a part of the grid.)
  The initial CP rank of the sum of $N_0$ interpolating Slater potentials is about $4468$.
   Middle and right pictures show the long- and short-range parts of the 
  composite tensor, respectuvely. The initial rank of the canonical tensor 
 representing the  long-range part is equal to $R_L=2304$, which is reduced
 by the C2C procedure via RHOSVD to $R_{cc}=71$. 
 The rank truncation threshold is $\varepsilon = 10^{-3}$.

\begin{figure}[htb]
\centering
\includegraphics[width=5.3cm]{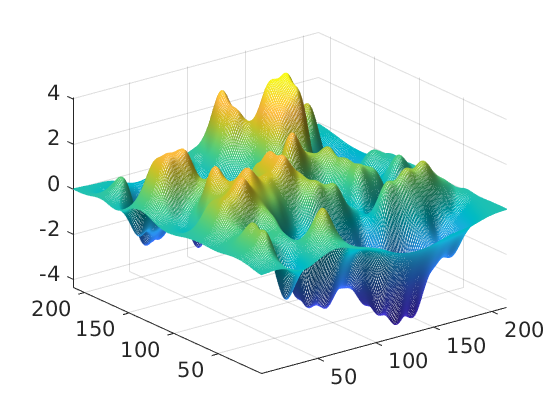} 
\includegraphics[width=5.3cm]{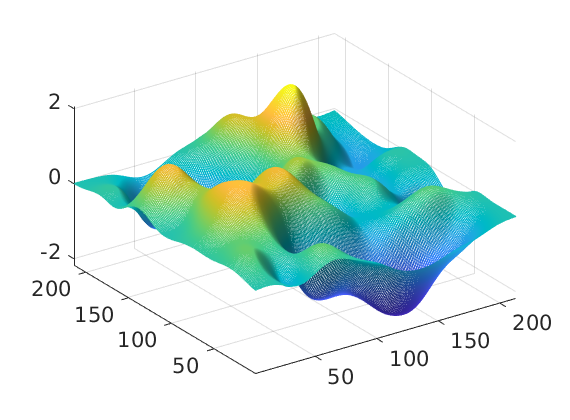}
\includegraphics[width=5.3cm]{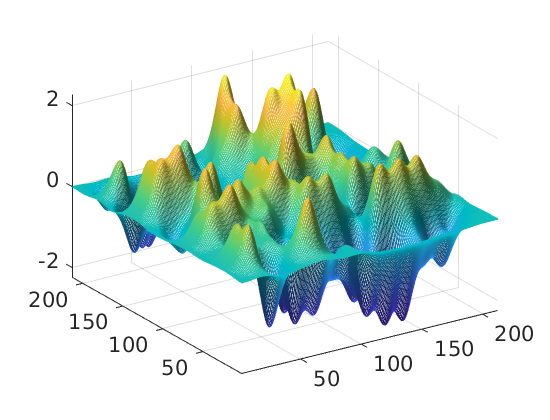} 
 \caption{\small Full-, long- and short-range components of the 
  multi-Slater tensor.   Slater kernels with $\lambda=0.5$ and with random amplitudes in the range of $[-5,5] $ are placed 
  in the nodes of a $12 \times 12\times 4 $ lattice using  
  3D grid of size $n^{\otimes 3}$ with $n=384$, $R=8, R_l=3$ and   the number of nodes $N=576$.}
\label{fig:Slat_multi_randon_12x12x4}
\end{figure}
 
  Figure \ref{fig:Slat_funct_12x12x4} and Table \ref{Tab:func_ranks} demonstrate the decomposition 
  of the multi-Slater tensor with the amplitudes in the nodes  modulated by the function
  of the (x,y,z)-coordinates 
 \begin{equation}   \label{eq:func2}
  F(x,y,z) = c_1 \cos(x+2y+4z) \exp(-c_2\sqrt{x^2 +2y^2 +4z^2}),
 \end{equation}
 with $c_1=6$ and $c_2=0.1$.

  Next, we generate a tensor composed of a sum of discretized Slater functions 
 on a sampling lattice $L_1\times L_2 \times L_3$, living on  
  3D representation grid of size $n^{\otimes 3}$ with $n=232$.
 The amplitudes of the individual Slater functions are
   modulated by a function of $x,y,z$-coordinates (\ref{eq:func2}) in every node of the lattice.
  Table \ref{Tab:func_ranks} shows rank of the long-range part of this multi-Slater tensor  
  with respect to the  increasing size of the lattice. $N=L_1\, L_2 \, L_3$ is the number of nodes, and $R_{ini}$ 
  and $R_{comp}$ are the initial and compressed canonical
  ranks, respectively. Tucker ranks are shown at the canonical-to-Tucker decomposition step.  
  Threshold values for the Slater potential generation is 
  $\varepsilon_N=10^{-5}$, the thresholds for the canonical-to-canonical
  rank reduction procedure are given by
  $\varepsilon_{Tuck}=10^{-3}$ and $\varepsilon_{T2C}=10^{-5}$. 
  Table \ref{Tab:func_ranks} demonstrates the very moderate icrease of the 
  reduced rank in the long-range part of the Slater potential sum on the size of the 3D sampling lattice.
       
 \begin{table}[htb]
\begin{center}\footnotesize 
\begin{tabular}
[c]{|r|r|r|r|r||}%
 \hline
 $L_1\times L_2\times L_3$ & $N$   & Tucker ranks &  $R_{ini}$ & $R_{comp}$  \\
  \hline
 $4\times 4\times 4$  &  64 & $7\times 7\times 7 $ & 256  & 25      \\ 
 \hline
  $6\times 6\times 6$  & 216 & $7\times 7\times 7 $ & 648  & 23   \\ 
 \hline
 $8\times 8\times 8$  & 512 & $7\times 7\times 7 $ & 1536  & 30   \\ 
 \hline
 $16\times 16\times 8$  & 2048 & $10\times 9\times 6 $ & 6144  & 47   \\ 
 \hline
 $16\times 16\times 16$  & 4096 & $10\times 9\times 8 $ & 12288  & 55   \\ 
 \hline
 \end{tabular}
\caption{\small Reduced canonical ranks for the case of functional amplitudes. 
$R_L =3$, $R_L =5$, $\varepsilon_N=10^{-5}$, $\varepsilon_{Tuck}=10^{-3}$, $\varepsilon_{T2C}=10^{-5}$.}
\label{Tab:func_ranks}
\end{center}
\end{table}  
 Figure \ref{fig:Slat_funct_12x12x4} demonstrates the full-, long-  and short-range components of the 
  multi-Slater tensor.   Slater kernels with $\lambda=0.5$ and with the  amplitudes 
  modulated by the function (\ref{eq:func2}) 
  of the $(x,y,z)$-coordinates are places on the nodes of a $12 \times 12\times 4 $ sampling lattice, living on  
  3D grid of size $n^{\otimes 3}$ with $n=384$, $R=8, R_l=3$, and with  the number of sampling nodes $N=576$.
 \begin{figure}[htb]
\centering
\includegraphics[width=5.3cm]{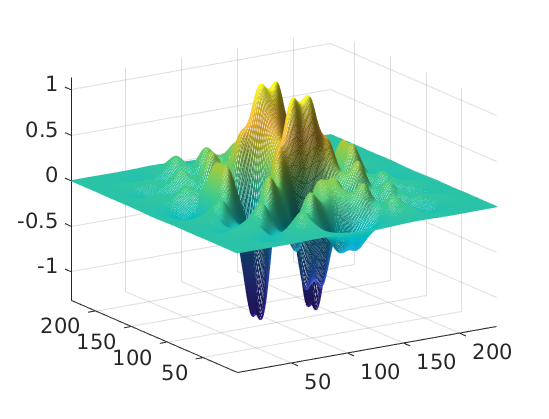} 
\includegraphics[width=5.3cm]{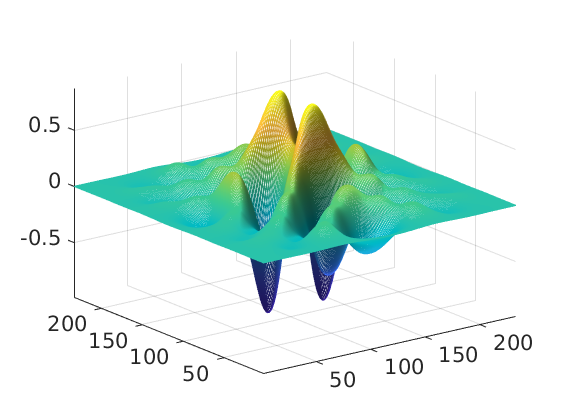}
\includegraphics[width=5.3cm]{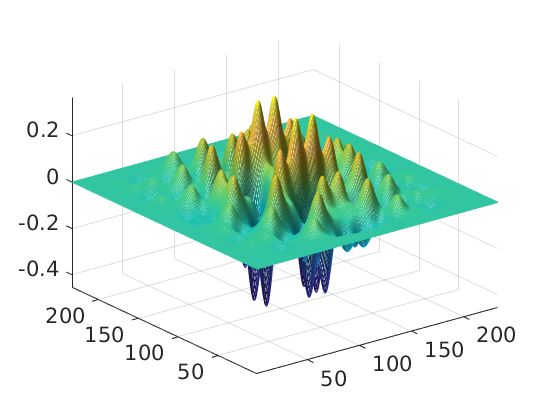} 
 \caption{\small Full-, long- and short-range components of the 
  multi-Slater tensor.   Slater kernels with $\lambda=0.5$ and with  amplitudes modulated 
  by the function (\ref{eq:func2}) using the nodes of a $12 \times 12\times 4 $ lattice on  
  3D grid of size $n^{\otimes 3}$ with $n=384$, $R=8, R_l=3$ and   the number of nodes $N=576$.}
\label{fig:Slat_funct_12x12x4}
\end{figure}

%
%

 \section{Representing Green's kernels in tensor format}\label{sec:Fundam_sol}
 
 In this section we demonstrate how the RHOSVD can be applied for the efficient tensor decomposition  of various 
 singular radial functions
 composed by polynomial expansions of a few reference potentials already precomputed in the low-rank tensor format.
 Given the low-rank CP tensor ${\bf A}$ further considered as a reference tensor, 
 the low rank representation of the tensor-valued polynomial function
 \[
  {P}({\bf A})= a_0 {\bf I} + a_1 {\bf A} + a_2 {\bf A}^2 + \cdots +a_n {\bf A}^n,
 \]
 where the multiplication of tensors is understood in the sense of pointwise Hadamard product,
can be calculated via $n$-times application of the RHOSVD by using the Horner scheme in the form
\[
 P(x) = a_0  + x(a_1 + x(a_2 + x(a_3 + \ldots x(a_{n-1} +a_n x) \ldots ))).
\]
Similar scheme can be also applied in the case of multivariate polynomials.

For examples considered in this section we make use of the discretized Slater 
$e^{-\|x\|}$ and Newton $\frac{1}{\|x\|}$, 
$x\in \mathbb{R}^d$, kernels as the reference tensors. The following statement 
was proven in \cite{HaKhtens:04I,Khor1:06}
(see also Lemma \ref{lem:sincSlater}). 
\begin{proposition} \label{prop:tensor_green}
The discretized over $n^{\otimes d}$-grid radial functions $e^{-\|x\|}$ and $\frac{1}{\|x\|}$, $x\in \mathbb{R}^d$, 
included in representation of various 
Green kernels and fundamental solutions for elliptic operators with constant coefficients, both
allow the low-rank CP tensor approximation. The corresponding rank-$R$ representations can be calculated in 
$O(d R n)$ operations without precomputing and storage of the target tensor in the full (entry-wise) format.
\end{proposition}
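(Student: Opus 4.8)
\noindent\emph{Sketch of a proof.}
The plan is to obtain both rank-$R$ approximations directly from the Laplace--Gauss integral representations of the two kernels combined with the exponentially convergent $\operatorname{sinc}$-quadrature of \S\ref{ssec:Theory_RS_sinc}; in fact the statement is essentially a repackaging of the construction leading to (\ref{eqn:sinc_general}), the estimate (\ref{eqn:sinc_conv}) and Lemma \ref{lem:sincSlater}, augmented by a bookkeeping argument for the arithmetic cost.

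First, for the Newton kernel I would start from (\ref{eqn:Laplace_Newton}), $\frac{1}{z}=\frac{2}{\sqrt{\pi}}\int_{\mathbb{R}_+}e^{-z^2t^2}\,dt$ with $z=\|x\|$, and for the Slater kernel from the generalized Laplace transform (\ref{Slater_Laplace_tr}) with the explicit weight $\widehat{G}(\tau)$. Inserting the uniformly (max-norm) convergent $\operatorname{sinc}$-quadrature (\ref{eqn:sinc_Newt}) produces, in the relevant range of $\|x\|$, a sum $\sum_{k=-M}^{M} p_k e^{-t_k^2\|x\|^2}$ of $2M+1$ Gaussians. The separability identity $e^{-t_k^2\|x\|^2}=\prod_{\ell=1}^{d} e^{-t_k^2 x_\ell^2}$ turns each summand into an elementary tensor, so that after discretization --- Galerkin projection (\ref{eqn:galten}) for the singular Newton kernel, collocation including the origin for the continuous Slater kernel --- one arrives at the rank-$R$ canonical tensor (\ref{eqn:sinc_general}) with $R=2M+1$, whose canonical vectors $\mathbf{p}_k^{(\ell)}\in\mathbb{R}^{n}$ are given by the one-dimensional quadratures (\ref{eqn:galten_int}). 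The accuracy claim then follows from the exponential bound (\ref{eqn:sinc_conv}): taking $M=O(\log^2\varepsilon)$ for the Newton kernel, and $M=O(|\log\varepsilon|(|\log\varepsilon|+\log A))$ for the Slater kernel as in Lemma \ref{lem:sincSlater}, yields $\|\mathbf{P}-\mathbf{P}_R\|\le\varepsilon\|\mathbf{P}\|$.

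For the complexity assertion I would observe that assembling $\mathbf{P}_R$ never touches the full $n^{\otimes d}$ array: the $\operatorname{sinc}$-nodes $t_k$ and weights $p_k$ are available in closed form (see \cite{Stenger:93,HaKhtens:04I}), and each vector $\mathbf{p}_k^{(\ell)}$ is built from $n$ univariate integrals (\ref{eqn:galten_int}) that depend only on the single coordinate $x_\ell$ and on $t_k$. With $d$ modes and $R=2M+1$ nodes this amounts to $O(dRn)$ arithmetic operations and $O(dRn)$ storage, uniformly in the dimension, as claimed.

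The only delicate point --- and where the argument needs care --- is the behaviour of the kernels at the origin, since the Laplace--Gauss representation and (\ref{eqn:sinc_conv}) are valid only for $\|x\|\ge a>0$ with a constant of the form $C/a$ that degrades as $a\to0$ (and Lemma \ref{lem:sincSlater} is stated for $\rho\in[1,A]$). On a grid of mesh size $h$ the nearest nonzero node lies at distance $\asymp h$, so one takes $a\asymp h$ (respectively rescales $\rho$ by $h^{-2}$); the blow-up is then absorbed into $M$, replacing $|\log\varepsilon|$ by $|\log(\varepsilon h)|$, which is still $O(|\log\varepsilon|)$ up to an additive $O(\log(1/h))$ term and does not affect the $O(dRn)$ scaling. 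For the Newton kernel the value at $x=0$ itself is finite under the Galerkin average (\ref{eqn:galten}) because $1/\|x\|$ is locally integrable in $\mathbb{R}^d$ for $d\ge2$, and the corresponding cell integral is reproduced by the same Gaussian-sum formula; for the Slater kernel no such issue arises, as $e^{-\lambda\|x\|}$ is bounded. Apart from this localization step, the proof is a direct assembly of the results of \S\ref{ssec:Theory_RS_sinc} and Lemma \ref{lem:sincSlater}. \hfill $\Box$
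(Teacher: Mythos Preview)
Your proposal is correct and follows precisely the approach the paper intends: the paper does not give a self-contained proof but simply cites \cite{HaKhtens:04I,Khor1:06} and points to Lemma~\ref{lem:sincSlater}, i.e., to the very $\operatorname{sinc}$-quadrature construction of \S\ref{ssec:Theory_RS_sinc} that you have spelled out in detail. Your treatment of the near-origin behaviour and the $O(dRn)$ bookkeeping are a welcome elaboration of what the paper leaves implicit.
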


Tensor decomposition for discretized singular  kernels such as $\|x\|$, $\frac{1}{\|x\|^m}$, $m \geq 2$, 
and $e^{-\kappa \|x\|}/\|x\|$,
can be now calculated by applying the RHOSVD to polynomial combinations of the reference potentials 
as in Proposition \ref{prop:tensor_green}. The most important benefit of the presented techniques is the opportunity 
to compute the rank-$R$ tensor approximations without precomputing and storage of the target tensor in the 
full format tensor.
  
  In what follows, we present the particular examples of singular kernels in $\mathbb{R}^d$ which 
  can be treated by the above  
presented  techniques.
  Consider the fundamental solution of the advection-diffusion operator ${\cal L}_d$ with constant coefficients 
in $\mathbb{R}^d$
\[
 {\cal L}_d = -\Delta + 2\bar{b} \cdot \nabla + \kappa^2, \quad \bar{b}\in \mathbb{C}^d, \quad 
 \kappa\in \mathbb{C}.
\]
If $\kappa^2 + |\bar{b}|^2 = 0$, then for $d\geq 3$ it holds
\[
 \eta_0(x) = \frac{1}{(d-2) \omega_d} \frac{e^{\langle \bar{b},x \rangle }}{\|x\|^{d-2}},
\]
where $\omega_d$ is the surface area of the unit sphere in $\mathbb{R}^d$.
Notice that the radial function $\frac{1}{\|x\|^{d-2}}$ for $d\geq 3$ allows the RS decomposition of the 
corresponding discrete tensor representation based on the sinc quadrature approximation, which implies
the RS representation of the kernel function $\eta_0(x)$, 
since the function $e^{\langle \bar{b},x \rangle }$ is already separable.
From computational point of view, both the CP and RS canonical decompositions of 
discretized kernels $\frac{1}{\|x\|^{d-2}}$
can be computed by successive application of RHOSVD approximation to the products of canonical tensors for 
the discretized Newton potential $\frac{1}{\|x\|}$.

In the particular case $\bar{b}=0$, we obtain the fundamental solution of the operator
${\cal L}_3 = -\Delta + \kappa^2$ for $d=3$, also known as the Yukawa 
(for $\kappa\in \mathbb{R}_+$) or Helmholtz (for $\kappa\in \mathbb{C}$) Green kernels
\[
\eta_\lambda(x) =  \frac{1}{4\pi} e^{-\kappa \|x\|}/\|x\|, \quad x\in \mathbb{R}^3.
\]
 In the case of Yukawa  kernel the tensor representations
by using Gaussian sums are considered in  
\cite{Khor_book:18,Khor_bookQC:18}, see also references therein.

The  Helmholtz equation with $\mbox{Im}\, \kappa >0$ (corresponds to the diffraction potentials) 
arises in problems of acoustics, electro-magnetics and optics.
We refer to \cite{MazSch:book:07} for the detailed discussion of this class of fundamental solutions. 
Fast algorithms for the oscillating Helmholtz kernel have been considered in  \cite{Khor_book:18}. 
However, in this case the construction of the RS tensor decomposition remains an open question.

In the  case of 3D  biharmonic operator ${\cal L}= \Delta^2$ the fundamental solution reads as
\[
 p(\|x\|)=- \frac{1}{8\pi} \|x\|, \quad x\in \mathbb{R}^3.
\]


The hydrodynamic potentials correspond to  the classical Stokes  operator
\[
 \nu \Delta {u} -\mbox{grad} \, p = { f}, \quad \mbox{div}\, {u} =0,
\]
where ${u}$ is the velocity field, $p$ denotes the pressure, and $\nu$ is the constant viscosity coefficient.
The solution of the Stokes problem in $\mathbb{R}^3$ can be expressed by the hydrodynamic potentials
\begin{equation}
 u_k (x) = \int\limits_{\mathbb{R}^3} \sum\limits^3_{\ell=1}
 \Psi_{k\ell} (x -{y}) f_\ell ({y}) \mbox{d}{y},\quad 
 p(x) = \int\limits_{\mathbb{R}^3} 
 \langle \Theta (x -{y}),{ f} ) ({y})\rangle \mbox{d}{y}
 \end{equation}
 with the fundamental solution
 \begin{equation}
  \Psi_{k\ell} (x) =\frac{1}{8\pi \nu} 
  \left(\frac{\delta_{k\ell}}{\|x\|} + \frac{x_k x_\ell}{\|x\|^3}\right), \quad
  \Theta (x)= \frac{x}{4\pi \|x\|^3}, \quad x\in \mathbb{R}^3.
 \end{equation}
 The existence of the low-rank RS tensor representation for  
the hydrodynamic potential is based on the same argument as in Remark \ref{prop:tensor_green}.
In turn, in the case of biharmonic fundamental solution we use the identity 
\[
 \|x\|= \frac{\|x\|^2}{\|x\|},
\]
where the nominator has the separation rank equals to $d$. The latter representation can be also applied for 
calculation of the respective tensor approximations.

Here we demonstrate how the application of RHOSVD allows to easily compute the low rank 
Tucker/CP approximation of the 
discretized singular potential $\frac{1}{\|x\|^3}$, $x \in \mathbb{R}^{3}$, 
as well as the respective RS-representation, 
having at hand the RS representation of the tensor ${\bf P}\in \mathbb{R}^{n\otimes 3}$ 
discretizing the Newton kernel.
In this example, we use the discretization of $\frac{1}{\|x\|^3}$ in the form 
\[
 {\bf P}^{(3)} = {\bf P}\odot {\bf P}\odot {\bf P},
\]
where by ${\bf P}^{(3)}$ we denotes the collocation projection discretization of $\frac{1}{\|x\|^3}$.
The low rank Tucker/CP tensor approximation to ${\bf P}^{(3)}$ can be computed by the direct application of 
the RHOSVD to the above product type representation. The RS representation of ${\bf P}^{(3)}$ is calculated 
based on Lemma \ref{lem:LRP_prod_tensor}.
Given the  RS-representation (\ref{eqn:RS_repres}) of the discretized Newton kernel, ${\bf P}_R$, we define  
the low rank CP approximation to the discretized singular part in the hydrodynamic potential ${\bf P}^{(3)}$ by
\[
 {\bf P}^{(3)}_{R'} = {\bf P}_R\odot {\bf P}_R \odot {\bf P}_R.
\]
In view of Lemma \ref{lem:LRP_prod_tensor}, the long range part of RS decomposition of ${\bf P}^{(3)}_{R'}$, can 
be computed by RHOSVD approximation to the following Hadamard product of tensors,
\[
 {\bf P}^{(3)}_{R'_l}={\bf P}_{R_l}\odot {\bf P}_{R_l} \odot {\bf P}_{R_l}.
\]
Figure \ref{fig:hydrodin_full_long_n257_error} visualizes the tensor ${\bf P}^{(3)}_{R'}$ as well 
as its long range part ${\bf P}^{(3)}_{R'_l}$.
 \begin{figure}[htb]
\centering
\includegraphics[width=5.3cm]{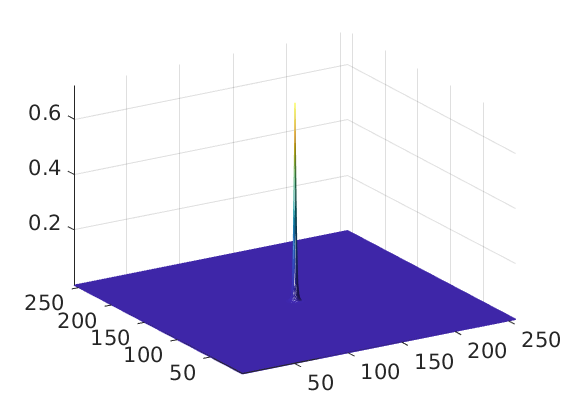} 
\includegraphics[width=5.3cm]{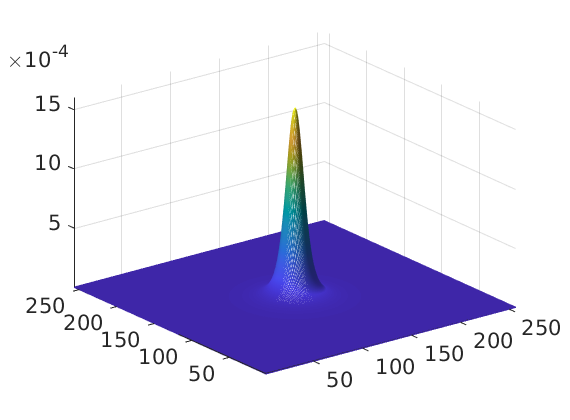} 
 \caption{\small RHOSVD approximation of the discretized cubic potential $\frac{1}{\|x\|^3}$ 
 and its long-range part.}
\label{fig:hydrodin_full_long_n257_error}
\end{figure}
The potentials are discretized on $n \times n \times n$ Cartesian grid with $n=257$, 
the rank truncation threshold is chosen 
for $\varepsilon=10^{-5}$. The CP rank of the Newton kernel is equal to $R=19$, while we set $R_l=10$, thus
resulting in the initial ranks $6859$ and $10^3$ for RHOSVD decomposition 
of ${\bf P}^{(3)}_{R'}$ and ${\bf P}^{(3)}_{R'_l}$, respectively. 
The RHOSVD decomposition reduces the large rank parameters to 
$R'= 122$ (the Tucker rank is $r=13$) and ${R'_l}=58$ (the Tucker rank is $r=8$), correspondingly.

\section{RHOSVD for rank reduction in 3D elliptic problem solvers}
\label{sec:solvers}

 Efficient rank reduction procedure based on the RHOSVD is a prerequisite for the development
 of the tensor-structured solvers for the three-dimensional elliptic 
 problem, which reduce  the computational complexity to
   almost linear scale, $O(nR)$, contrary to usual $O(n^3)$ complexity.

    Assume that all input data in the governing PDE are given in the low-rank tensor form. 
    The convenient tensor format for these problems is a canonical tensor representation of both 
    the governing operator, and of the initial guess as well as of the right hand side. 
    The commonly used numerical techniques 
  are based on certain iterative schemes that include at each iterative step multiple matrix-vector and  
  vector-vector algebraic operations
  each of them enlarges the tensor rank of the output in the additive or multiplicative way.
  It turns out that in common practice the most computationally intensive step 
  in the rank-structured algorithms is the adaptive rank truncation,  
  which makes the rank truncation  procedure ubiquitous.

  We notice that in PDE based mathematical models the total numerical complexity of the particular 
  computational scheme, i.e. 
  the overall cost of the  rank truncation procedure  is determined by the multiple of 
   the number of calls to the rank truncation algorithm (merely the number of iterations) 
  and the cost of a single RHOSVD transform (mainly determined by the rank parameter of the input tensor). 
  In turn, both complexity characteristics depend on the quality of the rank-structured preconditioner so that 
  optimization of the whole solution process is can be achieved by the trade-off 
  between Kronecker rank of the preconditioner   and the complexity of its implementation. 
  
    In the course of preconditioned iterations, 
   the tensor ranks of the governing operator, the preconditioner and the iterand are multiplied, 
   and therefore a robust rank reduction is mandatory procedure for 
   such techniques applied to iterative solution of elliptic and 
   pseudo-differential equations in the rank-structured tensor format.
   
     In particular, the RHOSVD was applied to the numerical solution of 
   PDE constrained (including the case of fractional operators) 
   optimal control problems \cite{HKKS:18,SKKS:21}, 
      where the complexity of the order $O(n R\log n  )$ was demonstrated. 
  In the case of higher dimensions the rank reduction in the canonical format can be performed 
 directly (i.e., without intermediate use of the Tucker approximation) by using 
 the cascading  ALS iteration in the CP format, see \cite{KhSch:11} concerning the tensor-structured 
 solution of the stochastic/parametric PDEs.

\section {Conclusions} \label{sec:conclusion}

We discuss theoretical and computational aspects of the RHOSVD served for 
approximation of tensors in low-rank Tucker/canonical formats,
and show that this rank reduction technique  is the principal ingredient in 
tensor-based computations for real-life problems in scientific computing and data modeling.  
We recall rank reduction scheme for the canonical input tensors based
on RHOSVD and subsequent Tucker-to-canonical transform. We present   
the detailed error analysis of low rank RHOSVD approximation to the canonical tensors 
(possibly with large input rank), and provide the proof on the uniform bound for the relative approximation error.
 
 We recall that the first example on application of the RHOSVD  was the  rank-structured 
 computation of the 3D convolution transform with the nonlocal Newton kernel in $\mathbb{R}^3$, 
 which is the basic  operation in the Hartree-Fock calculations. 
 
%

The RHOSVD is the basic tools for utilizing  the multilinear algebra in RS tensor format, which employs
the sinc-analytic tensor approximation methods applied to the important class of radial functions in $\mathbb{R}^d$. 
This enables efficient rank decompositions of
tensors generated by functions with multiple local cusps or singularities by separating 
their short- and long-range parts.
As an example, we construct the RS tensor representation of the discretized Slater 
function $e^{-\lambda \|x\|}$, $x\in\mathbb{R}^d $.
We then describe the RS tensor approximation  to various Green's kernels 
obtained by combination of this function with other potentials, in particular, with the Newton kernel 
  providing the Yukawa potential.  In this way, we introduce the concept of reproducing radial 
functions which pave the way for efficient RS tensor decomposition 
applied to a wide range of function-related multidimensional data  by combining the multilinear 
algebra in RS tensor format with the RHOSDV rank reduction techniques.

Our next example is related to application of RHOSVD to low-rank tensor interpolation of scattered data.
Our numerical tests demonstrate the efficiency of this approach on the example 
of multi-Slater interpolant in the case of many measurement points. 
We apply the RHOSVD to the data generated via random or function modulated amplitudes of samples 
and demonstrate numerically that for both cases the rank of the long-range
part remains small and depends weakly on the number of samples.

Finally, we notice that the described RHOSVD algorithms have proven their efficiency 
in a number of recent applications, in particular, 
in rank reduction for the  tensor-structured iterative solvers for PDE constraint optimal control problems 
(including fractional control), in construction of the range-separated tensor representations for calculation 
of the electrostatic potentials of many-particle systems (arising in protein modeling), and for 
numerical analysis of large scattered data in $\mathbb{R}^d$.

\begin{footnotesize}

\end{footnotesize}

\end{document}